\newcommand{\R}{\mathbb R}
\newcommand{\N}{\mathbb N}
\newcommand{\Z}{\mathbb Z}
\newcommand{\Q}{\mathbb Q}
\renewcommand{\P}{\mathbb P}
\newcommand{\E}{\mathbb E}
\newcommand{\indic}{1\negthickspace\text{I}}
\newcommand{\indicat}{\text{1}\negthickspace\text{I}}
\newcommand{\dd}{\text{d}}
\newcommand{\pass}{\vspace{0.3cm}\noindent}
\newcommand{\sk}{\operatorname{Sk}}
\DeclareMathOperator{\dimbox}{\dim_B}\DeclareMathOperator{\dimhaus}{\dim_H}
\DeclareMathOperator{\osc}{osc} 
 \DeclareMathOperator{\dist}{dist}
\DeclareMathOperator{\vol}{Vol}
\theoremstyle{plain}
\newtheorem{theo}{Theorem}[section]
\newtheorem{prop}[theo]{Proposition}
\newtheorem{lem}[theo]{Lemma}
\newtheorem{theor}{Theorem}
\theoremstyle{definition}
\definecolor{note}{rgb}{0.2,0.1,1}
\definecolor{coupe}{rgb}{1,0.1,0.6}
\newcommand{\goodgap}{\hspace{\subfigcapskip}}
\author[Pierre Calka \& Yann Demichel]{Pierre Calka\textsuperscript{1} and Yann Demichel\textsuperscript{2}}
\address{\textsuperscript{1} Laboratoire de Mathématiques Raphaël Salem, UMR 6085, Université de Rouen, avenue de
l'Université, Technopôle du Madrillet, 76801 Saint-Etienne-du-Rouvray, France.}
\email{pierre.calka@univ-rouen.fr}%
\address{\textsuperscript{2} Laboratoire MODAL'X, EA 3454, Université Paris Ouest Nanterre La Défense, 200 avenue de la Ré\-pu\-bli\-que, 92001 Nanterre, France.}
\email{yann.demichel@u-paris10.fr}%
\address{This work was partially supported by the French ANR grant PRESAGE (ANR-11-BS02-003), the French ANR grant MATAIM (ANR-09-BLAN-0029-01) and the French research group GeoSto (CNRS-GDR3477).}
\title[Fractal random series generated by Poisson-Voronoi tessellations]{Fractal random series generated by Poisson-Voronoi tessellations}
\begin{document}

\begin{abstract}
In this paper, we construct a new family of random series defined on $\R^D$, indexed by one scaling parameter and two Hurst-like exponents. The model is close to Takagi-Knopp functions, save for the fact that the underlying partitions of $\R^D$ are not the usual dyadic meshes but random Voronoi tessellations generated by Poisson point processes. This approach leads us to a continuous function whose random graph is shown to be fractal with explicit and equal box and Hausdorff dimensions. The proof of this main result is based on several new distributional properties of the Poisson-Voronoi tessellation on the one hand, an estimate of the oscillations of the function coupled with an application of a Frostman-type lemma on the other hand. Finally, we introduce two related models and provide in particular a box-dimension calculation for a derived deterministic Takagi-Knopp series with hexagonal bases.
\end{abstract}


\subjclass[2010]{Primary 28A80, 60D05; secondary 26B35, 28A78, 60G55}

\keywords{Poisson-Voronoi tessellation, Poisson point process, Random functions, Takagi series, Fractal dimension, Hausdorff dimension}

\maketitle

\fussy
\section*{Introduction}\label{sec:intro}

\noindent The original Weierstrass series (see \cite{wei95}) is a fundamental example of continuous but nowhere differentiable function.
Among the more general family of Weierstrass-type functions, the Takagi-Knopp series can be defined in one dimension as
\begin{equation}\label{def:taka1D}
K_H(x)= \sum_{n=0}^{\infty} 2^{-nH} \Delta(2^nx)\,\,,\,x\in\R,
\end{equation}
where $\Delta(x)=\dist(x,\Z)$ is the sawtooth -or pyramidal- function and $H\in(0,1]$ is called the Hurst parameter of the function. Introduced at the early beginning of the 20th century (see \cite{tak03,knopp18}), they have been extensively studied since then (see the two recent surveys \cite{all11,laga}).

\pass The construction of $K_H$ is only based on two ingredients: a sequence of partitions of $\R$ (the dyadic meshes) associated with a decreasing sequence of amplitudes for the consecutive layers of pyramids. Therefore, we can easily extend definition \eqref{def:taka1D} to dimension $D\geqslant2$ using the $D$-dimensional dyadic meshes.

\pass In order to provide realistic models for highly irregular signals such as rough surfaces (see \cite{four,mill,dub89} and Chapter 6 in \cite{russ}), it is needed to randomize such deterministic functions. Two common ways to do it are the following: either the pyramids are translated at each step by a random vector (see e.g. \cite{tri95,heurt03,demtri06}), or the height of each pyramid is randomly chosen (see in particular \cite{four} for the famous construction of the Brownian bridge).

\pass In many cases the graphs of such functions are fractal sets. Therefore, their fractal dimensions provide crucial information for describing the roughness of the data (see \cite{mand84,issa03}). The two most common fractal dimensions are the box-dimension and the Hausdorff dimension. The former is in general easier to calculate whereas the latter is known only in very special cases (see e.g. \cite{maul86,led,hunt98,car,bar11}).

\pass In this paper, a new family of Takagi-Knopp type series is introduced. Contrary to the previous randomization procedures, our key-idea is to substitute a sequence of random partitions of $\R^D$ for the dyadic meshes. An alternative idea would have been to keep the cubes and choose independently and uniformly in each cube each center of a pyramid. Notably because the mesh has only $D$ directions, it would be very tricky to calculate the Hausdorff dimension. One advantage for applicational purposes may be to get rid of the rigid structure induced by the cubes and to provide more flexibility with the irregular pattern. A classical model of a random partition is the Poisson-Voronoi tessellation.

\pass For a locally finite set of points called nuclei, we construct the associated Voronoi partition of $\R^D$ by associating to each nucleus $c$ its cell $\cal{C}_c$, i.e. the set of points which are closer to $c$ than to any other nucleus. When the set of nuclei is a homogeneous Poisson point process, we speak of a Poisson-Voronoi tessellation (see e.g. \cite{oka00,mol94,cal10}). In particular, the Poisson point process (resp. the tessellation) is invariant under any measure preserving transformation of $\R^D$, in particular any isometric transformation (see \eqref{eq:invtrans}). Moreover the cells from the tessellation are almost surely convex polytopes. Classical results for the typical Poisson-Voronoi cell include limit theorems (see \cite{ab93}), distributional (see \cite{bl07,cal03}) and asymptotic results (see \cite{hrs04,cs05}). The model is commonly used in various domains such as molecular biology (see \cite{pou04}), thermal conductivity (see \cite{ks95}) or telecommunications (see e.g. \cite{vgs10} and Chapter 5 in \cite{bb09} Volume 1). The only parameter needed to describe the tessellation is the intensity $\lambda>0$, i.e. the mean number of nuclei or cells per unit volume. In particular, the mean area of a typical cell from the tessellation is $\lambda^{-1}$. Multiplied by the scaling factor $\lambda^{\frac1{D}}$, the Poisson-Voronoi tessellation of intensity $\lambda$ is equal in distribution to the Poisson-Voronoi tessellation of intensity one. This scaling invariance is a crucial property that will be widely used in the sequel.

\pass Let $\lambda>1$ and $\alpha,\beta>0$. The parameter $\lambda$ is roughly speaking a scaling factor and $\alpha,\beta$ are Hurst-like exponents. For every integer $n\geqslant0$, we denote by $\cal{X}_n$ a homogeneous Poisson point process of intensity $\lambda^{n\beta}$ in $\R^D$ and by $\cal{T}_n=\{\cal{C}_c:c\in \cal{X}_n\}$ the set of cells of the underlying Poisson-Voronoi tessellation. We recall that
$\lambda^{\frac{n\beta}{D}}\mathcal{T}_n\overset{\mbox{{\tiny{def}}}}{=} \{\lambda^{\frac{n\beta}{D}}\cal{C}_c:c\in \cal{X}_n\}$ is distributed as $\cal{T}_0$ and $\lambda^{\frac{\beta}{D}}{\mathcal T}_n\overset{\mbox{{\tiny law}}}{=}{\mathcal T}_{n-1}$ thanks to the scaling invariance. Moreover, for any isometric transformation $I:\R^D\longrightarrow\R^D$, we have
\begin{equation}\label{eq:invtrans}
I(\mathcal{T}_0)\overset{\mbox{{\tiny{def}}}}{=}\{I(\cal{C}_c):c\in \cal{X}_0\}\overset{\mbox{{\tiny{law}}}}{=}\{\cal{C}_c:c\in \cal{X}_0\}.
\end{equation}

\pass Let $\Delta_n:\R^D \longrightarrow [0,1]$ be the random pyramidal function satisfying $\Delta_n=0$ on $\bigcup_{c\in\cal{X}_n}\partial \cal{C}_c$, $\Delta_n=1$ on $\cal{X}_n$ and piecewise linear (see Figure \ref{fig:mosaicpoiss} and the beginning of section \ref{subsec:randomsets} for more details).

\begin{figure}[!h]
\centering %
\subfigure[The Poisson-Voronoi tessellation ${\mathcal T}_n$.]{\includegraphics[width=7.5cm]{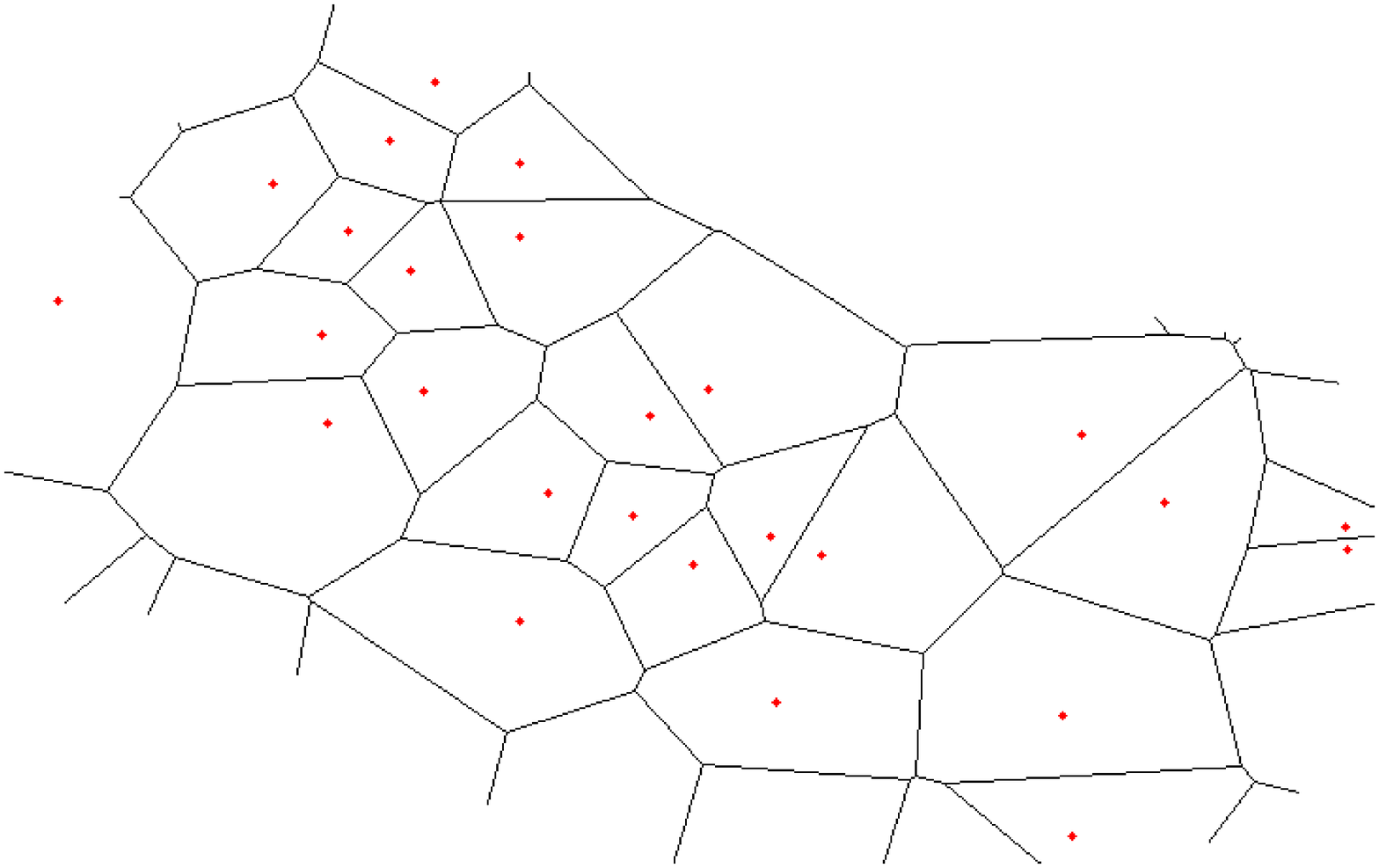}}\goodgap
\subfigure[The pyramidal function $\Delta_n$.]{\includegraphics[width=7.5cm]{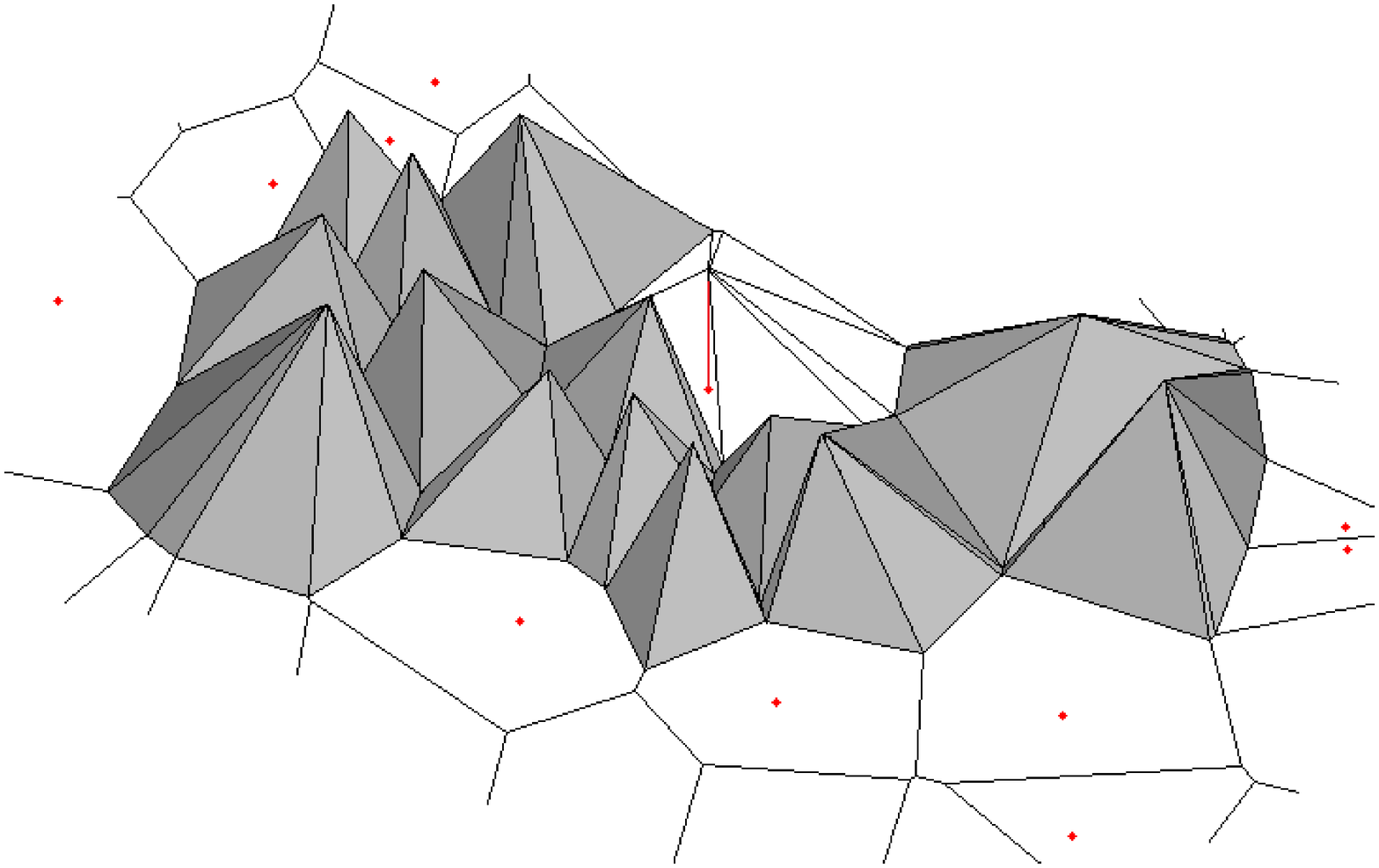}}\goodgap \\
\vspace{-0.2cm}
\caption{\label{fig:mosaicpoiss} Construction of the elementary piecewise linear function $\Delta_n$.}
\end{figure}

\noindent In the sequel the Poisson point processes $\cal{X}_n$ are assumed to be independent. We consider the continuous function
\begin{equation}\label{def:voronoiseries}
F_{\lambda,\alpha,\beta}(x)=\sum_{n=0}^{\infty} \lambda^{-\frac{n\alpha}{D}} \Delta_n(x)\,\,,\,x\in\R^D.
\end{equation}
In particular $F_{\lambda,\alpha,\beta}$ is a sum of independent functions.

\pass Let us denote by $\dimbox(K)$ and $\dimhaus(K)$ the (upper) box-dimension and the Hausdorff dimension of a non-empty compact set $K$ (see e.g. \cite{fal03} for precise definitions). We are mainly interested in the exact values of these dimensions. Our result is the following: \enlargethispage{1cm}

\begin{theor}\label{theo:maintheo}
Let $\lambda>1$ and $0<\alpha\leqslant\beta\leqslant 1$. Then $F_{\lambda,\alpha,\beta}$ is a continuous function whose random graph
\begin{equation*}
\Gamma_{\lambda,\alpha,\beta}=\left\{(x,F_{\lambda,\alpha,\beta}(x)):x\in[0,1]^D\right\}\subset \R^D\times \R
\end{equation*}
is a fractal set satisfying almost surely
\begin{equation}\label{eq:maintheo}
\dimbox(\Gamma_{\lambda,\alpha,\beta})= \dimhaus(\Gamma_{\lambda,\alpha,\beta})=D+1-\frac{\alpha}{\beta}.
\end{equation}
\end{theor}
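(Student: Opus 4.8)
Write $F=F_{\lambda,\alpha,\beta}$ and $\Gamma=\Gamma_{\lambda,\alpha,\beta}$. The plan is to prove separately the bounds $\dimbox(\Gamma)\leqslant D+1-\frac\alpha\beta$ and $\dimhaus(\Gamma)\geqslant D+1-\frac\alpha\beta$; since $\dimhaus\leqslant\dimbox$ for every bounded set, this forces all the dimensions in \eqref{eq:maintheo} to coincide. Continuity of $F$ is immediate: $\norminf{\Delta_n}\leqslant1$ together with $\lambda>1$, $\alpha>0$ make the series uniformly convergent, while each $\Delta_n$ is continuous (piecewise linear, vanishing on cell boundaries). Thus $\Gamma$ is compact of topological dimension $D$, and it is a genuine fractal set as soon as $\alpha<\beta$; when $\alpha=\beta$ the claimed value is $D$, which holds automatically because the projection $\Gamma\to[0,1]^D$ is onto and $1$-Lipschitz, so we assume $\alpha<\beta$ from now on.

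\emph{Upper bound.} Let $N(\delta)$ be the least number of $\delta$-cubes covering $\Gamma$. Covering along the standard grid gives $N(\delta)\leqslant\delta^{-D}+\delta^{-1}\sum_Q\osc_Q F$, the sum running over the $\delta^{-D}$ grid cubes $Q$ in $[0,1]^D$, so it suffices to bound $\E\!\left[\sum_Q\osc_Q F\right]$. Choose $N$ with $\lambda^{-\frac{N\beta}D}\asymp\delta$ and write $\osc_Q F\leqslant\sum_n\lambda^{-\frac{n\alpha}D}\osc_Q\Delta_n$. For $n>N$, $\osc_Q\Delta_n\leqslant1$, so these layers contribute $\lesssim\sum_{n>N}\lambda^{-\frac{n\alpha}D}\,\delta^{-D}\asymp\delta^{\frac\alpha\beta-D}$. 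For $n\leqslant N$ the cells of $\mathcal X_n$ are at least as large as $Q$, which lies within $O(1)$ of them, and $\osc_Q\Delta_n\leqslant\sqrt D\,\delta\sup_Q\abs{\nabla\Delta_n}$; here $\abs{\nabla\Delta_n}$ at a point equals the reciprocal of the distance from the ambient nucleus to the relevant facet of its cell, hence is at most $2/R$ with $R$ the nearest-neighbour distance of that nucleus in $\mathcal X_n$. By stationarity $\E[\sup_Q\abs{\nabla\Delta_n}]$ is independent of $Q$, and the classical bound $\P(R<t)\lesssim\lambda^{n\beta}t^D$ yields $\E[\sup_Q\abs{\nabla\Delta_n}]\lesssim\lambda^{\frac{n\beta}D}$ when $D\geqslant2$ (and the same up to a logarithm when $D=1$, using in addition $\osc_Q\Delta_n\leqslant1$). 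Summing the resulting geometric series, the contribution of the layers $n\leqslant N$ is $\lesssim\delta^{1-D}\sum_{n\leqslant N}\lambda^{\frac{n(\beta-\alpha)}D}\lesssim\delta^{1-D}\lambda^{\frac{N(\beta-\alpha)}D}\asymp\delta^{\frac\alpha\beta-D}$ (here $\alpha\leqslant\beta$ is used). Hence $\E[N(\delta)]\lesssim\delta^{-D}+\delta^{\frac\alpha\beta-D-1}\lesssim\delta^{-(D+1-\frac\alpha\beta)}$, and Markov's inequality applied along $\delta_k=2^{-k}$ together with the Borel--Cantelli lemma gives $\dimbox(\Gamma)\leqslant D+1-\frac\alpha\beta$ almost surely.

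\emph{Lower bound.} I would use the Frostman energy method with the pushforward $\mu$ of Lebesgue measure on $[0,1]^D$ under $x\mapsto(x,F(x))$, a probability measure carried by $\Gamma$. Fix $s<D+1-\frac\alpha\beta$; by Fubini, $\E\!\left[\iint\abs{p-q}^{-s}\dd\mu(p)\,\dd\mu(q)\right]=\iint_{[0,1]^{2D}}\E\!\left[\big(\abs{x-y}^2+\abs{F(x)-F(y)}^2\big)^{-s/2}\right]\dd x\,\dd y$. Fix $x\neq y$, set $a=\abs{x-y}$, and choose $N$ with $\lambda^{-\frac{N\beta}D}\asymp a$. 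Independence of the $\mathcal X_n$ gives $F(x)-F(y)=\lambda^{-\frac{N\alpha}D}(\Delta_N(x)-\Delta_N(y))+R$ with $R$ independent of the first term, so $\P(\abs{F(x)-F(y)}\leqslant t)\leqslant\sup_c\P\big(\abs{\Delta_N(x)-\Delta_N(y)-c}\leqslant t\,\lambda^{\frac{N\alpha}D}\big)$. By stationarity and isotropy of $\mathcal X_N$ and the scaling invariance of the tessellation, the law of $\Delta_N(x)-\Delta_N(y)$ depends on $(x,y)$ only through $a\lambda^{\frac{N\beta}D}\in(\lambda^{-\frac\beta D},1]$, and the distributional properties of the Poisson--Voronoi tessellation proved below show that its Lévy concentration function is $\lesssim u$, uniformly over that range. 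Hence $\P(\abs{F(x)-F(y)}\leqslant t)\lesssim t\,a^{-\frac\alpha\beta}$; splitting the expectation dyadically in $\abs{F(x)-F(y)}$ (the geometric sum converges since $s>1$ for $s$ near $D+1-\frac\alpha\beta\geqslant D$, and $\abs{F(x)-F(y)}\leqslant2\norminf F<\infty$), we obtain $\E[(\abs{x-y}^2+\abs{F(x)-F(y)}^2)^{-s/2}]\lesssim a^{1-\frac\alpha\beta-s}$. Since $\iint_{[0,1]^{2D}}\abs{x-y}^{1-\frac\alpha\beta-s}\dd x\,\dd y<\infty$ precisely when $s<D+1-\frac\alpha\beta$, the $s$-energy of $\mu$ is almost surely finite, so $\dimhaus(\Gamma)\geqslant s$; letting $s\uparrow D+1-\frac\alpha\beta$ along a sequence concludes.

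\emph{Main difficulty.} The crux is the uniform Lévy concentration estimate (equivalently, a uniformly bounded density) for the single-layer increment $\Delta_N(x)-\Delta_N(y)$: one must show, via Palm calculus for the nuclei of $\mathcal X_N$ lying near $x$ and $y$, that displacing one well-chosen nucleus spreads this increment out with a density bounded independently of the rescaled separation $a\lambda^{\frac{N\beta}D}$. This is exactly where the announced new distributional properties of the Poisson--Voronoi tessellation enter, and it forces a case analysis according to whether $x$ and $y$ lie in the same cell and which facet-cones contain them. By comparison the oscillation estimate behind the upper bound is routine, the only mild nuisance being the heavier tail of $1/R$ in dimension $D=1$, which is harmlessly absorbed into a logarithm.
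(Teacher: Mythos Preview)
Your overall strategy matches the paper's---oscillation estimates for the box-dimension upper bound, a Frostman energy argument for the Hausdorff lower bound---but both halves contain a genuine gap that the paper closes with a device you do not introduce: the random oscillation sets ${\mathcal O}_{n,N}$ (points whose $\tau_N$-ball lies in a single simplex of ${\mathcal S}_n$) and their intersection $W_N=\bigcap_{n\geqslant N}{\mathcal O}_{n,n}$.

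For the upper bound, the claim that for $n\leqslant N$ the cube $Q$ ``lies within $O(1)$'' cells is false: there is no deterministic bound on the number of simplices meeting a fixed cube, so $\sup_Q|\nabla\Delta_n|$ is not governed by a single nearest-neighbour distance $R$. Even pointwise, the gradient is $L(x)=2/\|c_n(x)-c_n'(x)\|$ with $c_n'(x)$ the \emph{secondary} nucleus, and its integrability (Proposition~\ref{prop:lipconst}) does not reduce to $\P(R<t)\lesssim\lambda^{n\beta}t^D$; in $D=1$ the nearest-neighbour distance of the nucleus is exponential and $\E[1/R]=\infty$, whereas $\|c_0(x)-c_0'(x)\|$ is Gamma$(2,1)$ and $\E[L(x)]<\infty$. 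The paper handles all of this by conditioning: on $\{0\in{\mathcal O}_{n,N}\}$ the cube lies in one simplex and the Lipschitz constant is $\lambda^{n\beta/D}L(0)$ with uniformly bounded conditional mean, while on the complement one uses the trivial bound $\osc\leqslant\lambda^{-n\alpha/D}$ together with the estimate $\P(0\notin{\mathcal O}_{n,N})=O(\lambda^{(n\beta-NH)/D})$ of Proposition~\ref{prop:size}.

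For the lower bound, the paper does \emph{not} prove the unconditional L\'evy concentration bound you invoke. Proposition~\ref{prop:denshn} gives the density of $Z_n(x,y)$ only conditionally on $\{x\in{\mathcal O}_{n,n}\}$ and only when $\|x-y\|\leqslant\tau_n=\lambda^{-nH/D}$ with $H>\beta$: on that event the explicit formula \eqref{diffhauteur} holds because $x$ and $y$ share a simplex, but without the conditioning the increment involves two distinct affine pieces and no such formula is available. Consequently the paper does not push forward Lebesgue measure from all of $[0,1]^D$ but only from $W_N$, proves $\P(\vol(W_N)>0)\to1$, obtains finite $s$-energy for every $s<D+(\beta-\alpha)/H$, and only then lets $H\downarrow\beta$ along rationals. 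Your ``direct'' route with layer $N$ chosen so that $\lambda^{-N\beta/D}\asymp\|x-y\|$ corresponds to taking $H=\beta$, which is precisely the endpoint the paper cannot reach; making it work would require the concentration bound uniformly over configurations where $x$ sits arbitrarily close to a simplex boundary at scale $N$, and the case analysis you allude to is exactly what has not been carried out.
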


\noindent Equalities \eqref{eq:maintheo} imply that the smaller $\frac{\alpha}{\beta}$ is, the more irregular $F_{\lambda,\alpha,\beta}$ and $\Gamma_{\lambda,\alpha,\beta}$ are (see Figure \ref{fig:modelpoiss1D} and Figure \ref{fig:modelpoiss2D}).
The result of Theorem \ref{theo:maintheo} naturally holds when $[0,1]^D$ is replaced with any cube of $\R^D$.

\begin{figure}[!h]
\centering
\subfigure[$(\lambda,\alpha,\beta)=(1.2,1,1)$]{\includegraphics[width=6cm]{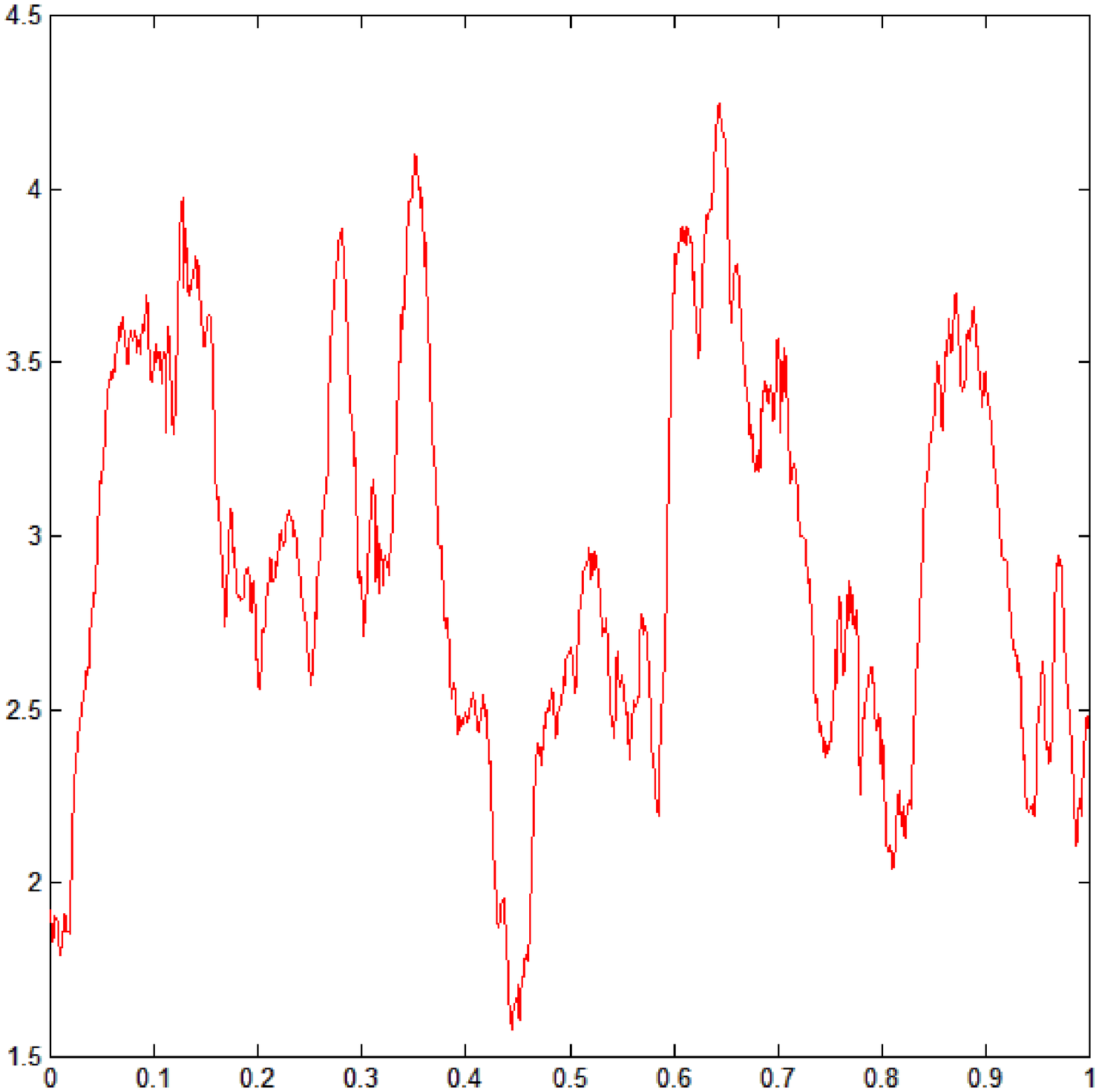}}\goodgap
\subfigure[$(\lambda,\alpha,\beta)=(1.2,0.2,1)$]{\includegraphics[width=6cm]{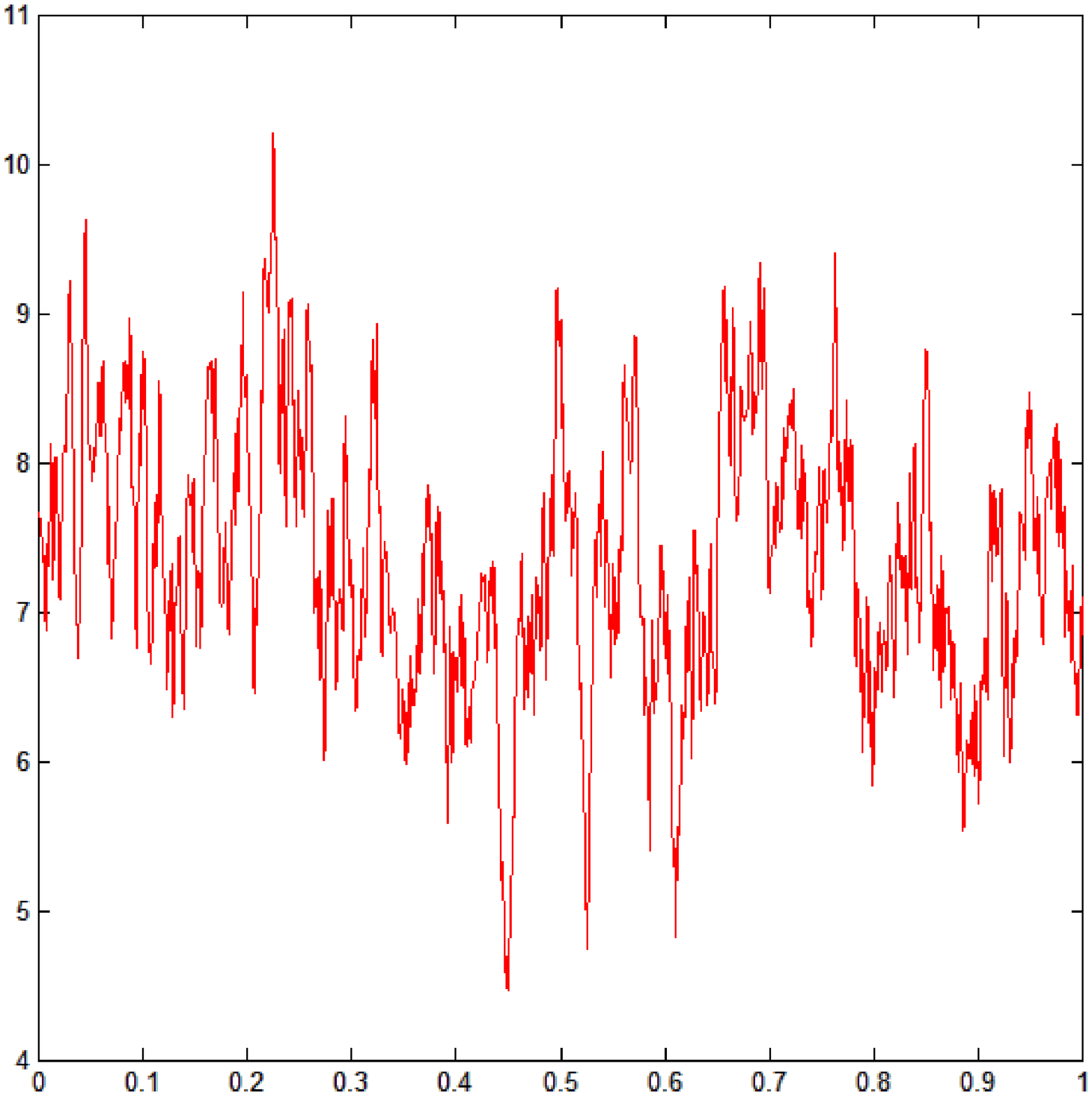}}\goodgap \\
\vspace{-0.2cm}
\caption{\label{fig:modelpoiss1D} Graph of the random function $F_{\lambda,\alpha,\beta}$ when $D=1$.}
\end{figure}

\begin{figure}[!h]
\centering
\subfigure[$(\lambda,\alpha,\beta)=(1.5,1,1)$]{\includegraphics[width=7cm]{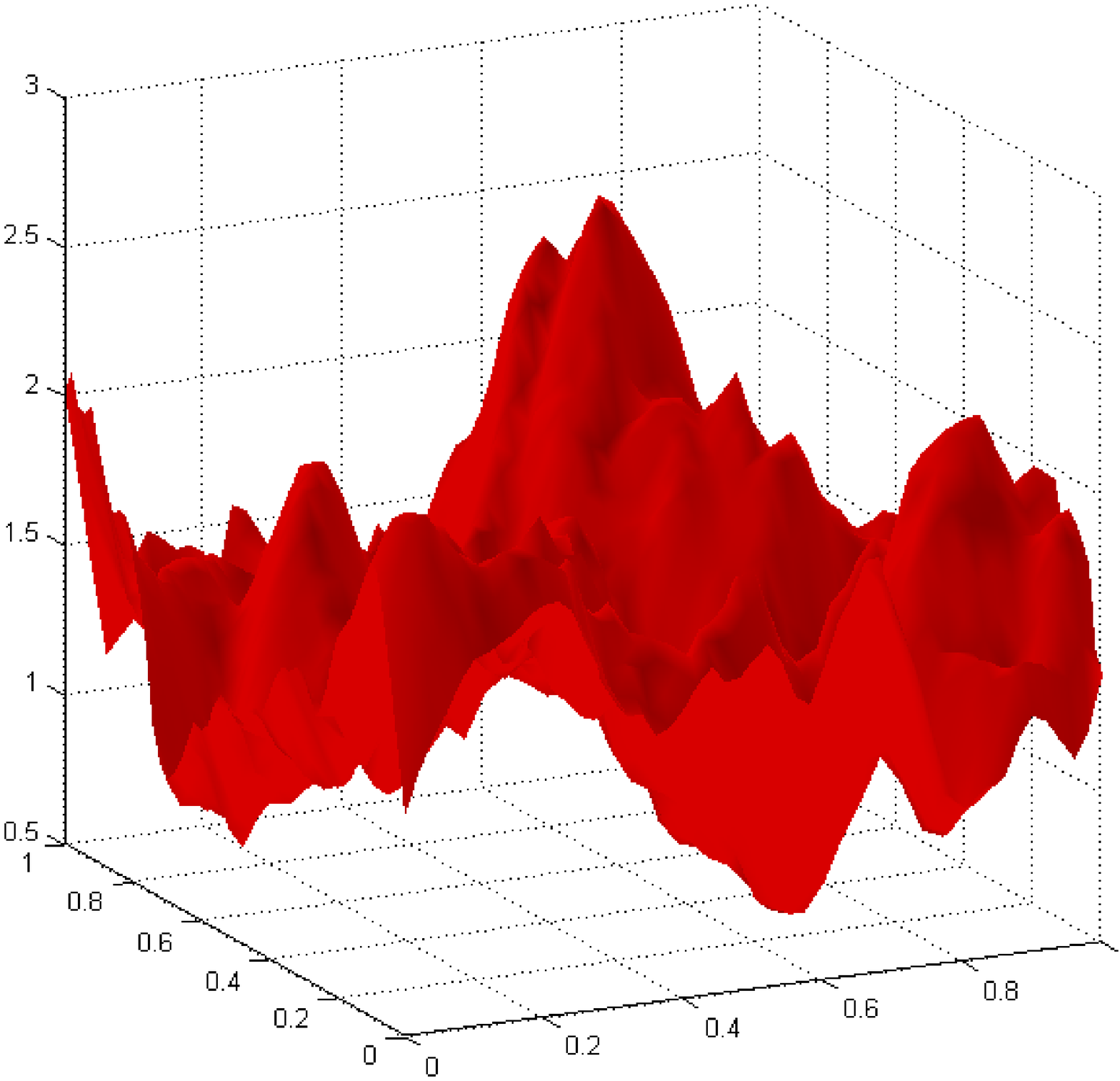}}\goodgap
\subfigure[$(\lambda,\alpha,\beta)=(1.5,0.2,1)$]{\includegraphics[width=7cm]{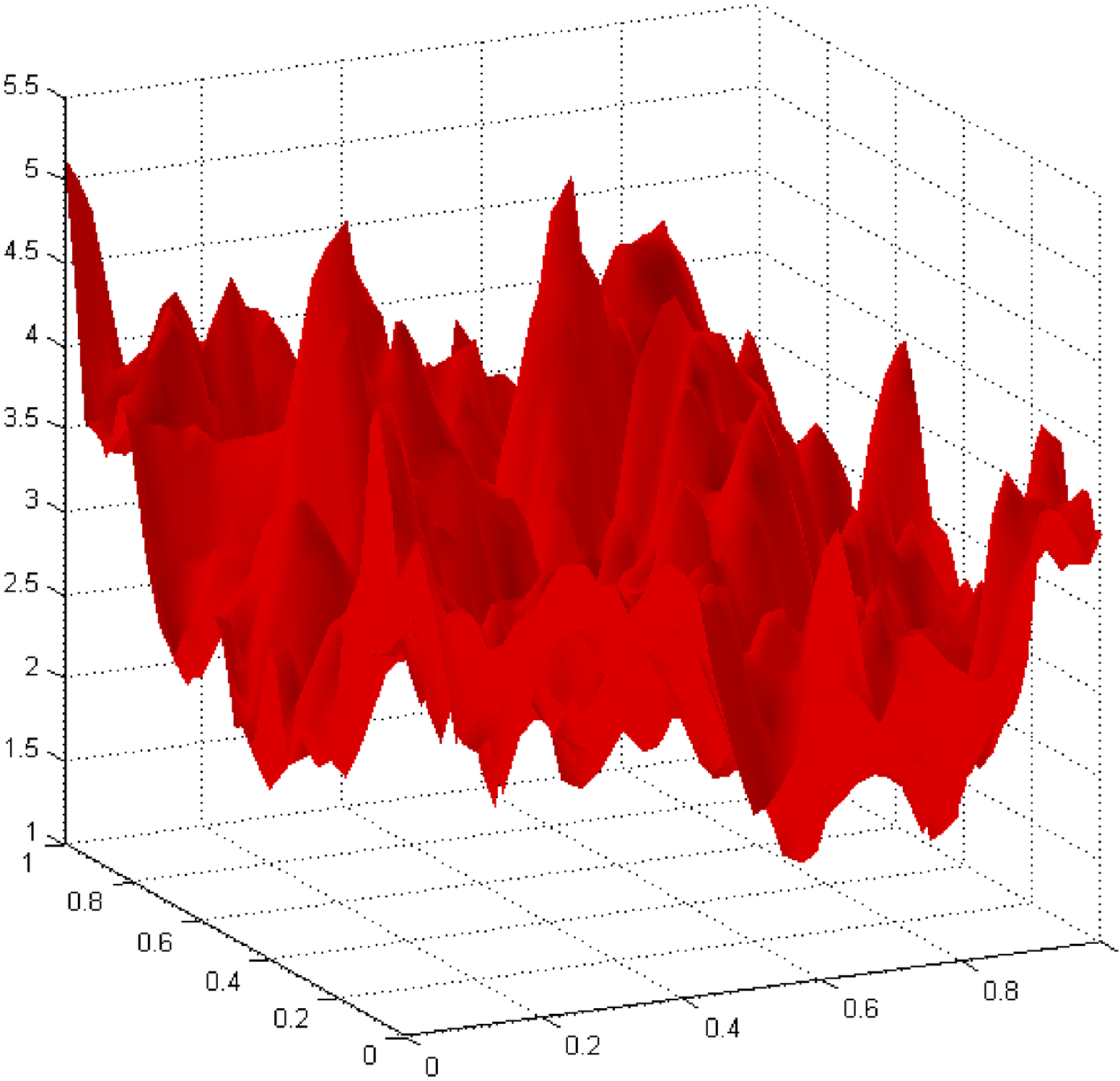}}\goodgap \\
\vspace{-0.2cm}
\caption{\label{fig:modelpoiss2D} Graph of the random function $F_{\lambda,\alpha,\beta}$ when $D=2$.}
\end{figure}

\pass The paper is organized as follows. In the first section we state some preliminary results related to the geometry of the Poisson-Voronoi tessellations. We introduce in particular the oscillation sets ${\mathcal O}_{n,N}$ (see \eqref{eq:randomset}) that are used to derive explicit distributional properties on the increments of $\Delta_n$ and precise estimates on the increments of $F_{\lambda,\alpha,\beta}$. Section 2 is then devoted to the proof of Theorem \ref{theo:maintheo}. An upper bound for $\dimbox(\Gamma_{\lambda,\alpha,\beta})$ comes from the estimation of the oscillations of $F_{\lambda,\alpha,\beta}$ whereas a lower bound for $\dimhaus(\Gamma_{\lambda,\alpha,\beta})$ is obtained via a Frostman-type lemma. Finally, we introduce and study in the last section two related models: a deterministic series based on an hexagonal mesh and a random series based on a perturbation of the dyadic mesh.

\pass In the sequel we will drop the indices $\lambda$, $\alpha$ and $\beta$ so that $F=F_{\lambda,\alpha,\beta}$ and $\Gamma=\Gamma_{\lambda,\alpha,\beta}$.

\pass
\section{Preliminary results}\label{sec:prelim}

\subsection{Notations}\label{subsec:notations}

\pass

\noindent We consider the metric space $\R^D$, $D\geqslant1$, endowed with the Euclidean norm $\|\cdot\|$. The closed ball with center $x\in\R^D$ and radius $r>0$ is denoted by $B_r(x)$. We write $\vol(A)$ for the Lebesgue measure of a Borel set $A\subset\R^D$. In particular $\kappa_D=\vol(B_1(0))$. The unit sphere of $\R^D$ is denoted by ${\mathbb S}^{D-1}$ and $\sigma_{D-1}$ will be the unnormalized area measure on ${\mathbb S}^{D-1}$. The surface area of ${\mathbb S}^{D-1}$ is then $\omega_{D-1}=\sigma_{D-1}({\mathbb S}^{D-1})$. Finally, for all $s\geqslant0$, the $s$-dimensional Hausdorff measure is ${\mathcal H}^s$.

\pass For all $x,y\in[0,1]^D$ and all $n\geqslant 0$ let
\begin{equation}\label{eq:defZn}
Z_n(x,y) = \lambda^{-\frac{n\alpha}{D}}(\Delta_n(x)-\Delta_n(y))
\end{equation}
so that $F(x)-F(y)= \sum_{n=0}^{\infty} Z_n(x,y)$, and
\begin{equation}\label{eq:Sn}
S_n(x,y) = \sum_{\substack{m=0 \\ m\neq n}}^{\infty} Z_m(x,y)
\end{equation}
so that $F(x)-F(y)=Z_n(x,y)+S_n(x,y)$. Notice that $Z_n(x,y)$ and $Z_m(x,y)$ are two independent random variables for $m\neq n$. In particular $Z_n(x,y)$ and $S_n(x,y)$ are independent.

\pass Finally, we fix $H>\beta$. For all $n\geqslant0$, we set $\tau_n=\lambda^{-\frac{nH}{D}}$.

\subsection{Random oscillation sets}\label{subsec:randomsets}

\pass

\noindent Remember that the function $\Delta_n$ is piecewise linear. Any maximal set on which $\Delta_n$ is linear is the convex hull $\mbox{Conv}(\{c\}\cup f)$ of the union of a nucleus $c$ from ${\mathcal X}_n$ and a hyperface (i.e. a $(D-1)$-dimensional face) $f$ of the cell associated with $c$. The set ${\mathcal S}_n$ of such simplices tessellates $\R^D$. For all $n,N\geqslant 0$ we define the random sets
\begin{equation}\label{eq:randomset}
{\mathcal O}_{n,N}= \left\{x\in [0,1]^D : \mbox{all points of $B_{\tau_N}(x)$ are in the same simplex of ${\mathcal S}_n$ as $x$} \right\}
\end{equation}
and
\begin{equation}\label{eq:WN}
W_N=\bigcap_{n=N}^{\infty} {\mathcal O}_{n,n}.
\end{equation}
The set ${\mathcal O}_{n,N}$ is referenced as random oscillation set because the oscillations of the function $\Delta_n$ can be properly estimated only on such set.

\pass The first result states that these sets are not too `small'.
\begin{prop}\label{prop:size} \hfill
\begin{enumerate}
\vspace{-0.1cm}
\item[(i)] There exists a constant $C>0$ such that, for all $x\in[0,1]^D$ and all $N\geqslant n\geqslant 0$,
\begin{equation*}\label{eq:probaon}
\P(x\not\in {\mathcal O}_{n,N})\leqslant C \lambda^{\frac{n\beta-NH}{D}}.
\end{equation*}
\vspace{-0.3cm}
\item[(ii)] We have $\,\lim_{N\to\infty}\P(\vol(W_{N})>0)=1.$
\end{enumerate}
\end{prop}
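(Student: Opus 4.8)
My plan is to prove (i) by a direct geometric argument: the event $\{x \notin \mathcal{O}_{n,N}\}$ means the ball $B_{\tau_N}(x)$ is not contained in the simplex of $\mathcal{S}_n$ containing $x$, i.e.\ $B_{\tau_N}(x)$ meets the boundary $\bigcup_{c\in\mathcal{X}_n}\partial\mathcal{C}_c$ of the Voronoi tessellation $\mathcal{T}_n$ \emph{or} meets the union of the ``diagonal'' hyperplanes through a nucleus and a hyperface of its cell. In other words, $x$ lies within distance $\tau_N$ of the $(D-1)$-skeleton $\sk_{n}$ of the simplicial complex $\mathcal{S}_n$. By the scaling invariance $\lambda^{n\beta/D}\mathcal{T}_n \overset{\text{law}}{=} \mathcal{T}_0$, this probability equals the probability that the origin (or any fixed point, by stationarity) lies within distance $\lambda^{n\beta/D}\tau_N = \lambda^{(n\beta - NH)/D}$ of $\sk_0$. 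So it suffices to show that for the unit-intensity tessellation, $\P(\dist(0, \sk_0) \le \rho) \le C\rho$ for all small $\rho > 0$, which is the standard ``edge effect'' estimate: the expected $(D-1)$-volume of $\sk_0$ in a bounded region is finite, so a tube of radius $\rho$ around it has expected volume $O(\rho)$, and Markov's inequality (applied via a stationarity/Palm argument, e.g.\ integrating the tube-volume density over a unit cube) gives the bound. I would make this precise by bounding $\P(\dist(0,\sk_0)\le\rho)$ by the mean number of hyperfaces of $\mathcal{T}_0$ (and of the extra diagonal walls) meeting $B_\rho(0)$ times $O(\rho)$, using that the typical cell has finitely many faces in expectation; when $\rho \ge 1$ the stated bound is trivial since probabilities are $\le 1 \le C\rho$, which is why the hypothesis $N \ge n$ (ensuring $\lambda^{(n\beta-NH)/D} \le \lambda^{(n(\beta - H))/D} \le 1$ using $H > \beta$... actually one needs $H>\beta$ here) is harmless, but in fact the constant can be chosen uniformly.

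For part (ii), I would use part (i) together with the definition $W_N = \bigcap_{n=N}^\infty \mathcal{O}_{n,n}$. By Fubini and (i) with $n = N' \ge N$ (taking the diagonal $n = N$ in the notation of (i)),
\begin{equation*}
\E\bigl[\vol([0,1]^D \setminus W_N)\bigr] = \int_{[0,1]^D} \P(x \notin W_N)\,\dd x \le \sum_{n=N}^\infty \int_{[0,1]^D}\P(x\notin\mathcal{O}_{n,n})\,\dd x \le C\sum_{n=N}^\infty \lambda^{\frac{n(\beta - H)}{D}},
\end{equation*}
and since $H > \beta$ this geometric series is finite and tends to $0$ as $N \to \infty$. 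Hence $\E[\vol(W_N)] \to 1$. Since $\vol(W_N) \le 1$ almost surely, writing $\E[\vol(W_N)] \le \P(\vol(W_N) > 0) \cdot 1 + 0$ gives $\P(\vol(W_N) > 0) \ge \E[\vol(W_N)] \to 1$, and the reverse inequality is trivial, so $\lim_{N\to\infty}\P(\vol(W_N) > 0) = 1$.

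The main obstacle is the geometric estimate underlying (i): one must control not only the distance from a point to the Voronoi skeleton (a classical and easy computation) but also the distance to the additional hyperplanes spanned by a nucleus $c$ and each hyperface of $\mathcal{C}_c$, which form the extra internal walls of the simplicial subdivision $\mathcal{S}_n$. This requires a bound on the mean number of such walls near a typical point, i.e.\ control of the expected combinatorial complexity (number of hyperfaces) of the typical Poisson--Voronoi cell, together with a moment bound ensuring no concentration of skeleton volume; these are available from the classical distributional theory of the Poisson--Voronoi tessellation cited in the introduction, and I expect the tube-volume bound to follow from an integral-geometric (Crofton-type) argument combined with the stationarity of $\mathcal{X}_n$.
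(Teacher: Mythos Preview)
Your proposal is correct and follows essentially the same route as the paper: for (i) both arguments identify $\{x\notin\mathcal{O}_{n,N}\}$ with $\{x\in\sk_n+B_{\tau_N}(0)\}$, use stationarity to reduce to a fixed point, convert the probability into the expected volume of a $\tau_N$-tube around the skeleton inside a unit cube, and exploit the scaling $\lambda^{n\beta/D}\mathcal{T}_n\overset{\text{law}}{=}\mathcal{T}_0$ to extract the factor $\lambda^{(n\beta-NH)/D}$; for (ii) both sum the diagonal bounds $\P(x\notin\mathcal{O}_{n,n})=O(\lambda^{n(\beta-H)/D})$ over $n\ge N$ and conclude via a Markov-type step. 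The only cosmetic differences are that the paper makes the tube-volume estimate explicit via the Steiner formula applied face by face (rather than the Crofton/Campbell phrasing you sketch), and for (ii) the paper uses Markov's inequality with threshold $1/2$ on $\vol([0,1]^D\setminus W_N)$ whereas you use the equivalent observation $\E[\vol(W_N)]\le\P(\vol(W_N)>0)$.
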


\begin{proof} \hfill

\noindent (i) By invariance by translation of ${\mathcal X}_n$ and ${\mathcal T}_n$, we notice that for every $x\in\R^D$,
\begin{equation}\label{eq:stationarity}
\P(0\not\in {\mathcal O}_{n,N})=\P(x\not\in {\mathcal O}_{n,N}).
\end{equation}

\noindent Let $\sk_n$ be the skeleton of the simplex tessellation ${\mathcal S}_n$, i.e. the union of the boundaries of all simplices
(see the grey region on Figure \ref{fig:skeleton}).

\begin{figure}[!h]
    \centering
    \includegraphics[width=11cm]{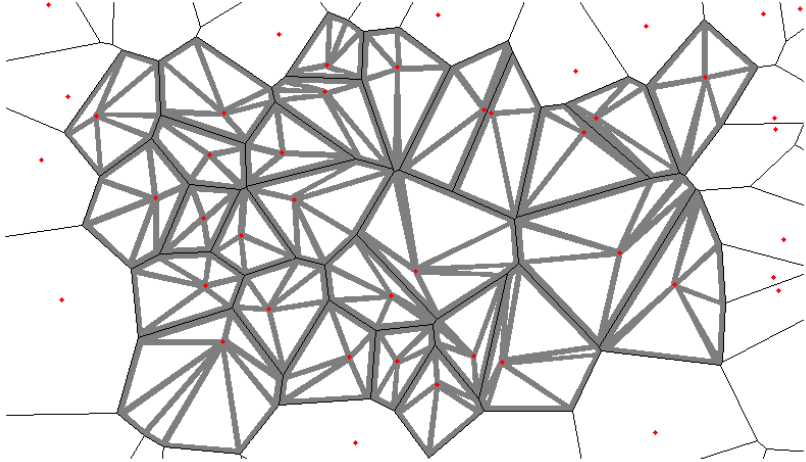}
    \caption{\label{fig:skeleton} The skeleton of the complete tessellation (in grey).}
\end{figure}

\noindent In particular, we have the equivalence
\begin{equation}\label{eq:skeleton}
x\not\in {\mathcal O}_{n,N}\Longleftrightarrow x\in \sk_n+B_{\tau_N}(0).
\end{equation}

\noindent Let $U$ be a uniform point in $[0,1]^D$, independent of the tessellation ${\mathcal T}_n$, and $\P_{{\mathcal X}_n}$ be the distribution of the Poisson point process ${\mathcal X}_n$. Using (\ref{eq:stationarity}) and Fubini's theorem, we get
\begin{equation*}
\P(U\not\in {\mathcal O}_{n,N})=\int_{[0,1]^D}\P_{{\mathcal X}_n}(x\not\in {\mathcal O}_{n,N})\mathrm{d}x=\P(0\not\in  {\mathcal O}_{n,N}).
\end{equation*}

\noindent Moreover, the equivalence (\ref{eq:skeleton}) implies that
\begin{equation}\label{eq:0notin}
\P(0\not\in {\mathcal O}_{n,N}) = \E_{{\mathcal X}_n}\biggl(\int_{[0,1]^D}\indic_{\sk_n+B_{\tau_N}(0)}(x)\dd x\biggl)
= \E_{\cal{X}_n}\bigl(\vol((\sk_n+B_{\tau_N}(0))\cap [0,1]^D)\bigl).
\end{equation}

\noindent It remains to calculate the Lebesgue measure of the set $(\sk_n+B_{\tau_N}(0))\cap [0,1]^D$. Denoting by ${\mathcal F}_{n}(a)$ the set of hyperfaces of the simplex tessellation ${\mathcal S}_n$ which intersect $[0,a]^D$ we have
\begin{equation*}
\vol((\sk_n+B_{\tau_N}(0))\cap [0,1]^D) \leqslant  \sum_{f\in {\mathcal F}_{n}(1)} \vol(f+B_{\tau_N}(0))+\vol(\partial([0,1]^D)+B_{\tau_N}(0)).
\end{equation*}
By Steiner formula (see e.g. Prolog in \cite{SW}), we have for every $f\in {\mathcal F}_n(1)$,
\begin{equation*}
\vol(f+B_{\tau_N}(0))=\sum_{i=0}^{D-1}\lambda^{-\frac{NH(D-i)}{D}}\kappa_{D-i}V_i(f)
\end{equation*}
where $V_i(f)$ is the $i$-th intrinsic volume of $f$.

\noindent Consequently, we have
\begin{align}\label{eq:airesquelette}
\E_{{\mathcal X}_n}\big(\vol((\sk_n+B_{\tau_N}(0)) & \cap [0,1]^D)\big) \nonumber\\
& \leqslant \sum_{i=0}^{D-1}\lambda^{-\frac{NH(D-i)}{D}}\kappa_{D-i}\E_{{\mathcal X}_n}\bigg(\sum_{f\in {\mathcal F}_{n}(1)} V_{i}(f)\bigg)+4D\lambda^{-\frac{NH}{D}}.
\end{align}

\noindent Using the invariance of ${\mathcal X}_n$ by scaling transformations and translations and the fact that $V_i$ is a homogeneous function of degree $i$, we observe that for every $0\leqslant i\leqslant D-1$,
\begin{align}\label{eq:scaling}
\E_{{\mathcal X}_n}\bigg(\sum_{f\in {\mathcal F}_{n}(1)}V_{i}(f)\bigg) = \lambda^{-\frac{n\beta i}{D}}
\,\E\bigg(\sum_{f\in {\mathcal F}_{0}(\lambda^{\frac{n\beta}{D}})}V_{i}(f) \bigg)
= \lambda^{-\frac{n\beta i}{D}}\,\E\bigg(\sum_{f\in {\mathcal F}_{0}(1)}V_{i}(f)\bigg)(\lambda^{\frac{n\beta}{D}})^D.
\end{align}

\noindent Combining (\ref{eq:0notin}), (\ref{eq:airesquelette}) and (\ref{eq:scaling}), we obtain the required result (i).

\noindent (ii) The point (i) implies that
\begin{equation*}
\E(\vol([0,1]^D\setminus {\mathcal O}_{n,n})) = \int_{[0,1]^D} \P(x\notin {\mathcal O}_{n,n}) \dd x = \P(0\notin {\mathcal O}_{n,n}) = \mathrm{O}(\lambda^{\frac{n(\beta-H)}{D}}).
\end{equation*}
Therefore, we obtain
\begin{equation*}
\E(\vol([0,1]^D\setminus W_{N}))\leqslant \sum_{n=N}^{\infty}\E(\vol([0,1]^D\setminus {\mathcal O}_{n,n}))\leqslant\sum_{n=N}^{\infty}\mathrm{O}(\lambda^{\frac{n(\beta-H)}{D}}) = \mathrm{O}(\lambda^{\frac{N(\beta-H)}{D}}).
\end{equation*}
Finally, using Markov's inequality,
\begin{equation*}
\P(\vol(W_{N})<1/2)=\P(\vol([0,1]^D\setminus W_{N})\geqslant 1/2) \leqslant 2\E(\vol([0,1]^D\setminus W_{N}))\leqslant \mathrm{O}(\lambda^{\frac{N(\beta-H)}{D}})
\end{equation*}
and the result (ii) follows.
\end{proof}

\subsection{Distribution of the random variable $Z_n(x,y)$}\label{subsec:distZn}

\pass

\noindent This subsection is devoted to the calculation of the distribution of $Z_n(x,y)$ conditionally on $\{x\in {\mathcal O}_{n,n}\}$ when $\|x-y\|\leqslant \tau_n$. In particular, we obtain in Proposition \ref{prop:denshn} below an explicit formula and an upper-bound for the conditional density of $Z_n(x,y)$. A similar method provides in Proposition \ref{prop:lipconst} the integrability of the local Lipschitz constant of $Z_0$. All these results will play a major role in the estimation of the oscillations of $F$ (see Proposition \ref{prop:estimoscill}) and the application of the Frostman criterion (see Proposition \ref{prop:majespercond}).

\pass For any $x\in [0,1]^D$, let $c_n(x)$ (resp. ${\mathcal C}_n(x)$) be the nucleus (resp. the cell) from the Voronoi tessellation ${\mathcal T}_n$ associated with $x$, i.e. the point of ${\chi_n}$ which is the closest to $x$ (resp. the cell of such point). Let $c'_n(x)$ be the `secondary nucleus' of $x$, i.e. the point of $\cal{X}_n$ which is the nucleus of the neighboring cell of ${\mathcal C}_n(x)$ in the direction of the half-line $[c_n(x),x)$. Moreover, for any $z_1\neq z_2\in\R^D$ and $x\in \R^D\setminus (B_{\tau_n}(z_1)\cup B_{\tau_n}(z_2))$, we consider
\begin{enumerate}
\item[-] the bisecting hyperplane $H_{z_1,z_2}$ of $[z_1,z_2]$,
\item[-] the cone $\Lambda(z_1,x)$ of apex $z_1$ and generated by the ball $B_{\tau_n}(x)$,
\item[-] the set $A_{n,x}$ of couples $(z_1,z_2)$ with $z_1\not\in B_{\tau_n}(x)$ such that $x$ is between the hyperplane orthogonal to $z_2-z_1$ and containing $z_1$ and the parallel hyperplane which is at distance $\tau_n$ from $H_{z_1,z_2}$ on the $z_1$-side:
      \begin{equation*}
            A_{n,x}=\biggl\{(z_1,z_2)\in B_{\tau_n}(x)^c\times\R^D :\,0\leqslant \bigl\langle x-z_1,z_2-z_1\bigl\rangle\leqslant\frac{1}{2}\,\|z_2-z_1\|^2\bigg(1-\frac{2\tau_n}{\|z_2-z_1\|}\bigg)\biggl\}.
      \end{equation*}
\end{enumerate}
Finally, we denote by $\cal{V}_n(x,z_1,z_2)$ the volume of the Voronoi flower associated with the intersection $H_{z_1,z_2}\cap \Lambda(z_1,x)$:
\begin{equation*}
\cal{V}_n(x,z_1,z_2) =\vol\left(\,\bigcup\,\left\{B_{\|u-z_1\|}(u):\,u\in H_{z_1,z_2}\cap \Lambda(z_1,x)\right\}\right).
\end{equation*}

\vspace{0.15cm}
\begin{figure}[!h]
    \centering
    \includegraphics[width=10cm]{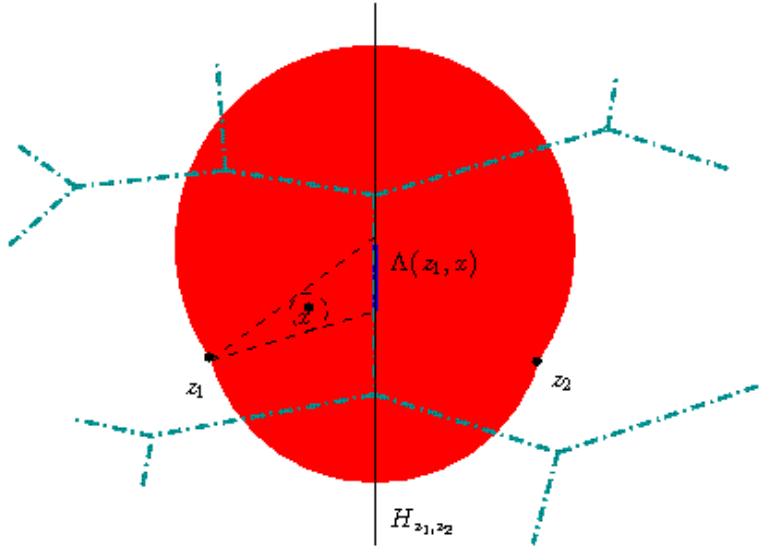}
    \caption{\label{fig:flower} The configuration of $(z_1,z_2)$ with the associated Voronoi flower (in red).}
\end{figure}

\newpage
\begin{prop}\label{prop:denshn}
Let $n\geqslant 0$ and $x,y\in[0,1]^D$ such that $x\in {\mathcal O}_{n,n}$ and $0<\|x-y\|\leqslant \tau_n$. Then
\begin{enumerate}
\item[(i)] The increment $Z_n(x,y)$ is given by
\begin{equation}\label{diffhauteur}
Z_n(x,y)=-\frac{2\lambda^{-\frac{n\alpha}{D}}}{\|c_n'(x)-c_n(x)\|^2}\,\bigl\langle x-y,c_n'(x)-c_n(x)\bigl\rangle.
\end{equation}
\item[(ii)] The density $g_{Z_n}$ of $Z_n(x,y)$ conditionally on $\{x\in {\mathcal O}_{n,n}\}$ is given by \eqref{densitezn} for $D\geqslant 2$ and by \eqref{eq:densitezncasD=1} for $D=1$. Moreover, it satisfies
\begin{equation}\label{eq:norminfdenshn}
\sup_{t\in\R}\,g_{Z_n}(t)  \leqslant \frac{C}{\P(x\in {\mathcal O}_{n,n})} \|x-y\|^{-1} \,\lambda^{-\frac{n(\beta-\alpha)}{D}}
\end{equation}
where $C$ is a positive constant depending only on the dimension $D$.
\end{enumerate}
\end{prop}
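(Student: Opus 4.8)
The plan is to derive (i) directly from the piecewise-linear structure of $\Delta_n$, and then to obtain (ii) by transporting the conditional law of the pair of nuclei $(c_n(x),c_n'(x))$ through the affine relation supplied by (i), computing that law with the Mecke formula. \emph{Proof of (i).} Write $c=c_n(x)$ and $c'=c_n'(x)$; the half-line $[c,x)$ leaves the cell $\mathcal{C}_n(x)$ through the hyperface shared with $\mathcal{C}_{c'}$, which lies in the bisecting hyperplane $H_{c,c'}=\{z:\langle z-c,c'-c\rangle=\tfrac12\|c'-c\|^2\}$. By construction of the simplicial tessellation $\mathcal{S}_n$, on the simplex containing $x$ — namely $\mathrm{Conv}(\{c\}\cup f)$ with $f$ that hyperface — the function $\Delta_n$ is the affine map equal to $1$ at $c$ and to $0$ on $H_{c,c'}$, i.e.\ $\Delta_n(z)=1-2\langle z-c,c'-c\rangle/\|c'-c\|^{2}$. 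Since $x\in\mathcal{O}_{n,n}$ and $\|x-y\|\leqslant\tau_n$, the point $y\in B_{\tau_n}(x)$ lies in the same simplex; subtracting the two affine values and multiplying by $\lambda^{-n\alpha/D}$ gives \eqref{diffhauteur}.

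For (ii), formula \eqref{diffhauteur} says $Z_n(x,y)=\varphi(c_n(x),c_n'(x))$ with $\varphi(z_1,z_2)=-2\lambda^{-n\alpha/D}\,\langle x-y,z_2-z_1\rangle/\|z_2-z_1\|^{2}$, so the conditional law of $Z_n(x,y)$ given $\{x\in\mathcal{O}_{n,n}\}$ is the image under $\varphi$ of the conditional law of $(c_n(x),c_n'(x))$, and it suffices to find the joint density of $(c_n(x),c_n'(x))$ on the event $\{x\in\mathcal{O}_{n,n}\}$. This is the core of the proof: one must translate $\{x\in\mathcal{O}_{n,n},\,c_n(x)=z_1,\,c_n'(x)=z_2\}$ into a condition on the Poisson configuration $\mathcal{X}_n$. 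Geometrically it says that $B_{\tau_n}(x)$ sits inside the single simplex of apex $z_1$ whose base is the hyperface $\mathcal{C}_{z_1}\cap H_{z_1,z_2}$, and I would check that this is equivalent to the conjunction of: (a) $z_1\notin B_{\tau_n}(x)$; (b) $(z_1,z_2)\in A_{n,x}$, the slab condition guaranteeing that $B_{\tau_n}(x)$ lies between the hyperplane through $z_1$ orthogonal to $z_2-z_1$ and the hyperplane parallel to $H_{z_1,z_2}$ at distance $\tau_n$ from it on the $z_1$-side; and (c) $\mathcal{X}_n$ has no point besides $z_1,z_2$ in the Voronoi flower $\mathcal{V}_n(x,z_1,z_2)$ over $H_{z_1,z_2}\cap\Lambda(z_1,x)$ — this last condition being exactly what forces every segment $[z_1,u)$ with $u\in B_{\tau_n}(x)$ to exit $\mathcal{C}_{z_1}$ through the face it shares with $\mathcal{C}_{z_2}$ and nowhere else. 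The Mecke–Slivnyak formula applied to the two points $z_1,z_2$ of $\mathcal{X}_n$ then yields that, on $\{x\in\mathcal{O}_{n,n}\}$, the pair $(c_n(x),c_n'(x))$ has density $(\lambda^{n\beta})^{2}\,\mathbf{1}_{A_{n,x}}(z_1,z_2)\,\mathbf{1}_{\{z_1\notin B_{\tau_n}(x)\}}\,e^{-\lambda^{n\beta}\mathcal{V}_n(x,z_1,z_2)}$, and dividing by $\P(x\in\mathcal{O}_{n,n})$ gives the conditional density. Establishing the equivalence (a)--(c), and in particular identifying the void region with $\mathcal{V}_n(x,z_1,z_2)$, is the main obstacle; the rest is computation.

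To pass to $g_{Z_n}$, write $z_2-z_1=\rho v$ with $\rho>0$, $v\in\mathbb{S}^{D-1}$, so that $\varphi(z_1,z_2)=-2\lambda^{-n\alpha/D}\rho^{-1}\langle x-y,v\rangle$; integrating the joint density above over $z_1$ and then disintegrating the uniform measure on $\mathbb{S}^{D-1}$ along the values of $\langle x-y,v\rangle$ produces the closed formula \eqref{densitezn} for $D\geqslant2$, while the case $D=1$ (where $\mathbb{S}^{0}=\{\pm1\}$) is handled separately and gives \eqref{eq:densitezncasD=1}. For the uniform bound \eqref{eq:norminfdenshn} I would invoke the scaling invariance of the Poisson–Voronoi tessellation: rescaling space by $\lambda^{n\beta/D}$ turns $\mathcal{X}_n$ into a unit-intensity process, replaces $x-y$ by a vector of norm $\lambda^{n\beta/D}\|x-y\|$ and the radius $\tau_n$ by $\lambda^{n(\beta-H)/D}\in(0,1]$, and shows $Z_n(x,y)=\lambda^{-n\alpha/D}\widetilde Z$, whence $g_{Z_n}(t)=\lambda^{n\alpha/D}g_{\widetilde Z}(\lambda^{n\alpha/D}t)$. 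It then remains to bound $\sup g_{\widetilde Z}$ by $C\,(\P(x\in\mathcal{O}_{n,n}))^{-1}(\lambda^{n\beta/D}\|x-y\|)^{-1}$ uniformly over the rescaled radius in $(0,1]$; this comes out of a direct estimate of the explicit density, the reciprocal of the rescaled displacement norm being the homogeneity factor carried by the map $w\mapsto\langle x-y,w\rangle/\|w\|^{2}$ while the factor $\mathbf{1}_{A_{n,x}}\,e^{-\mathcal{V}_n}$ keeps the remaining integral bounded by a dimensional constant. Substituting back gives \eqref{eq:norminfdenshn}.
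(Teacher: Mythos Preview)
Your argument for (i) and for the explicit conditional density in (ii) follows the paper's proof: the same affine formula for $\Delta_n$ on a simplex, the same geometric characterisation of $\{x\in\mathcal{O}_{n,n},\,c_n(x)=z_1,\,c_n'(x)=z_2\}$ via the Voronoi flower, the same application of Mecke--Slivnyak, and the same polar decomposition $z_2=z_1+\rho v$ followed by disintegration of $\sigma_{D-1}$ along $\langle x-y,v\rangle$. The equivalence (a)--(c) that you flag as the ``main obstacle'' is indeed the key geometric step, and your description of it is correct.

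The gap is in your treatment of the supremum bound \eqref{eq:norminfdenshn}. The scaling reduction you propose is valid and correctly isolates the factors $\lambda^{-n(\beta-\alpha)/D}$ and $\|x-y\|^{-1}$, but the sentence ``the factor $\mathbf{1}_{A_{n,x}}e^{-\mathcal{V}_n}$ keeps the remaining integral bounded by a dimensional constant'' hides essentially all of the analytic work, and that work is not routine. After your scaling and after extracting the homogeneity factor, what remains is an integral over $z_1$ (an unbounded domain) and over the spherical variables, weighted by $e^{-\mathcal{V}_n}$; controlling it requires a quantitative \emph{lower bound} on the flower volume, namely $\mathcal{V}_n\geqslant\kappa_D\bigl(\lambda^{-n\alpha/D}\|x-y\|s/(t\langle w,u_{s,v}\rangle)\bigr)^D$, together with a further change of variables $z_1=x-\gamma w$ and a decomposition of $w$ along $u_{s,v}$. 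Even then the resulting integral is only obviously finite when $D\geqslant 3$: in dimension $D=2$ the Jacobian factors $(1-s^2)^{-1/2}(1-\xi^2)^{-1/2}$ produce an Abelian-type integral $\int_0^{1\wedge a}(1-\xi)^{-1/2}(1-\xi/a)^{-1/2}\,\mathrm{d}\xi$ whose logarithmic behaviour near $a=1$ must be handled separately. The paper carries out exactly these computations, splitting into the three cases $D=1$, $D=2$, $D\geqslant 3$. Your proposal does not anticipate this difficulty, and without the flower-volume lower bound and the $D=2$ analysis the ``direct estimate'' cannot be completed.
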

\begin{proof} \hfill

\noindent (i) If $x\in {\mathcal O}_{n,n}$ and $\|x-y\|\leqslant \tau_n$ then
\begin{equation*}
\Delta_n(x)=\frac{\dist(x,H_{c_n(x),c_n'(x)})}{\dist(c_n(x),H_{c_n(x),c_n'(x)})}.
\end{equation*}
Moreover,
\begin{equation*}
\dist(x,H_{c_n(x),c_n'(x)})=\biggl\langle x -\frac{c_n(x)+c_n'(x)}{2}, \frac{c_n(x)-c_n'(x)}{\|c_n(x)-c_n'(x)\|}\biggl\rangle
\end{equation*}
and
\begin{equation*}
\dist(c_n(x),H_{c_n(x),c_n'(x)})=\frac{1}{2}\,\|c_n(x)-c_n'(x)\|.
\end{equation*}
It remains to use the definition (\ref{eq:defZn}) of $Z_n(x,y)$ to obtain the result (i).

\vspace{0.15cm}
\begin{figure}[!h]
    \centering
    \includegraphics[width=10cm]{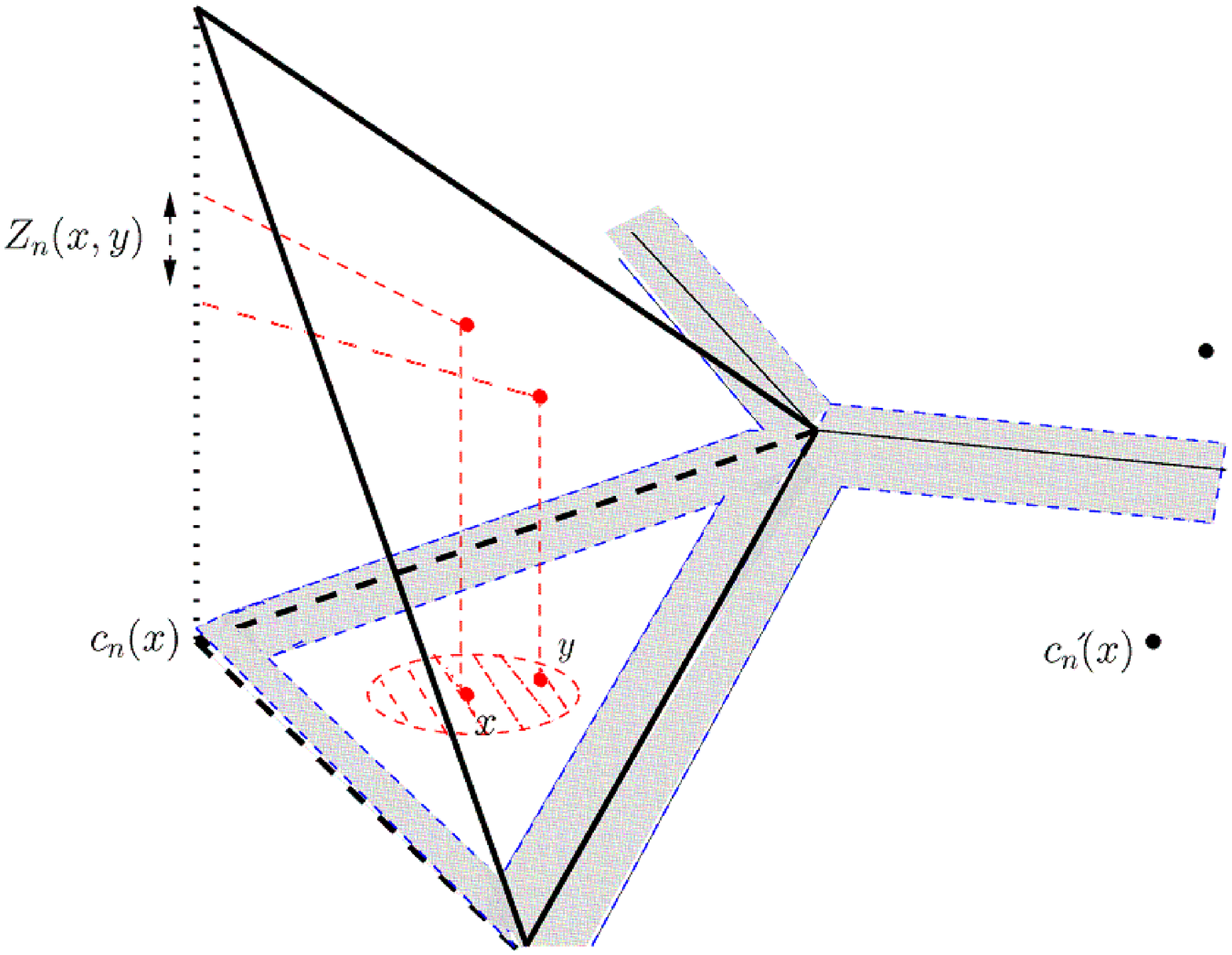}
    \caption{\label{fig:hauteurzn} The random variable $Z_n(x,y)$.}
\end{figure}

\noindent (ii) We need to determine the joint distribution of $(c_n(x),c_n'(x))$. We notice that $x$ belongs to ${\mathcal O}_{n,n}$
if and only if all the points of $B_{\tau_n}(x)$ have the same nucleus as $x$ in ${\mathcal X}_n$ and same `secondary nucleus'. In other words,
$$x\in {\mathcal O}_{n,n}\Longleftrightarrow \Lambda(c_n(x),x)\cap H_{c_n(x),c_n'(x)}\subset {\mathcal C}_{c_n(x)}\cap {\mathcal C}_{c_n'(x)}.$$
\noindent By Mecke-Slivnyak's formula (see Corollary $3.2.3$ in \cite{SW}), for any measurable and non-negative function $h:\R^2\longrightarrow \R_+$ we have
\begin{align*}
& \hspace*{-1cm} \E(h(c_n(x),c_n'(x))\indic_{\{x\in{\mathcal O}_{n,n}\}})\\& = \E\biggl(\,\sum_{z_1\neq z_2}h(z_1,z_2)\indic_{(c_n(x),c_n'(x))}(z_1,z_2)\indic_{\{x\in{\mathcal O}_{n,n}\}}\biggl)\\
& = \lambda^{2n\beta}\iint h(z_1,z_2)\P(\Lambda(z_1,x)\cap H_{z_1,z_2}\subset {\mathcal C}_{z_1}\cap{\mathcal C}_{z_2})\indic_{A_{n,x}}(z_1,z_2)\dd z_1 \dd z_2\\
& =\lambda^{2n\beta}\iint h(z_1,z_2)\exp\bigl(-\lambda^{n\beta}\cal{V}_n(x,z_1,z_2)\bigl)\indic_{A_{n,x}}(z_1,z_2)\dd z_1 \dd z_2.
\end{align*}

\noindent We point out a small abuse of notation above: the sets ${\mathcal C}_{z_1}$ and ${\mathcal C}_{z_2}$ are Voronoi cells associated with $z_1$ and $z_2$ when the underlying set of nuclei is ${\mathcal X}_{n}\cup \{z_1,z_2\}$. We proceed now with the change of variables $z_2=(z_2)_{\rho,u}=z_1+\rho u$ with $\rho>0$ and $u\in{\mathbb S}^{D-1}$.

\noindent Let $L_n(x)=\|c_n'(x)-c_n(x)\|$, $u_n(x)=\frac{c_n'(x)-c_n(x)}{L_n(x)}$ and ${\mathcal V}_n'(\cdot)={\mathcal V}_n(x,z_1,z_1+\cdot)$
(keep in mind that ${\mathcal V}_n'$ will still depend on $x$ and $z_1$ though this dependency will not be visible for sake of readability).
The density of $(L_n(x),u_n(x))$ conditionally on $\{x\in {\mathcal O}_{n,n}\}$ with respect to the product measure $\mathrm{d}\rho \mathrm{d}\sigma_{D-1}(u)$ is
\begin{equation}\label{densityLn}
\frac{\lambda^{2n\beta}}{\P(x\in {\mathcal O}_{n,n})}\int_{B_{\tau_n}(x)^c} \exp\bigl(-\lambda^{n\beta}\cal{V}_n'(\rho u)\bigl)\indic_{[0,\frac{\rho}{2}-\tau_n]}\bigl(\bigl\langle x-z_1,u\bigl\rangle\bigl)\rho^{D-1}\mathrm{d}z_1.
\end{equation}

\noindent Using \eqref{diffhauteur}, we can rewrite the quantity $Z_n(x,y)$ as a function of $L_n(x)$ and $u_n(x)$ as
\begin{equation*}
Z_n(x,y)=-\frac{2\lambda^{-\frac{n\alpha}{D}}}{L_n(x)}\bigl\langle x-y,u_n(x)\bigl\rangle.
\end{equation*}

\noindent The density of the distribution of $Z_n(x,y)$ conditionally on $\{x\in {\mathcal O}_{n,n}\}$ can then be calculated
in the following way: for any non-negative measurable function $\psi:\R_+\longrightarrow\R_+$,
\begin{equation}\label{densiteinterm}
\E(\psi(Z_n(x,y))) = \frac{\lambda^{2n\beta}}{\P(x\in {\mathcal O}_{n,n})}\int_{B_{\tau_n}(x)^c} J_n(x,y,z_1,\rho,u)\mathrm{d}z_1
\end{equation}
where $J_n(x,y,z_1,\rho,u)$ is equal to
\begin{align*}
\iint\psi\bigl(-2\lambda^{-\frac{n\alpha}{D}} \rho^{-1}\langle x-y,u\rangle\bigl)e^{-\lambda^{n\beta}\cal{V}_n'(\rho u)}\indic_{\R_+}(\langle x-z_1,u\rangle)\rho^{D-1}\mathrm{d}\rho\,\mathrm{d}\sigma_{D-1}(u),
\end{align*}
the domain of integration for $\rho$ being $[2\tau_n+2\langle x-z_1,u\rangle,\infty)$.
~\\~\\
\noindent {\it Case $D=1$}. We observe that $u =\pm 1$ and $\langle x-z_1,u\rangle=(x-z_1)u=\pm (x-z_1)$. Moreover, the condition $z_1\not\in B_{\tau_n}(x)$ means that $z_1<x-\tau_n$ or $z_1>x+\tau_n$. It implies that if $u=1$ (resp. $u=-1$), the range of $z_1$ is $(-\infty,x-\tau_n)$ (resp. $(x+\tau_n,\infty)$) while for fixed $z_1$ the range of $\rho$ is $(2\tau_n+2(x-z_1),\infty)$ (resp. $(2\tau_n+2(z_1-x),\infty)$). Finally, the set $\bigcup\,\bigl\{B_{\|u-z_1\|}(u):\,u\in H_{z_1,z_2}\cap \Lambda(z_1,x)\bigl\}$ is $[z_1,z_1+\rho]$ (if $u=1$) or $[z_1-\rho,z_1]$ (if $u=-1$) so ${\mathcal V}_n'(\rho u)=\rho$ in both cases. Consequently, we have
\begin{equation}\label{cased=1decomp}
\E(\psi(Z_n(x,y))) = \frac{\lambda^{2n\beta}}{\P(x\in {\mathcal O}_{n,n})}(I_1+I_2)
\end{equation}
where
$$ I_1=\int_{-\infty}^{x-\tau_n}\bigg(\int_{2\tau_n+2(x-z_1)}^{+\infty}\psi(-2\lambda^{-n\alpha}\rho^{-1}(x-y))e^{-\lambda^{n\beta}\rho}
\mathrm{d}\rho\bigg)\mathrm{d}z_1$$
and
$$ I_2=\int_{x+\tau_n}^{+\infty}\bigg(\int_{2\tau_n+2(z_1-x)}^{+\infty}\psi(2\lambda^{-n\alpha}\rho^{-1}(x-y))e^{-\lambda^{n\beta}\rho}
\mathrm{d}\rho\bigg)\mathrm{d}z_1.$$
Applying Fubini's theorem in $I_1$ then the change of variables $\rho'=-\rho$, we get
\begin{align}\label{cased=1I1}
I_1&=\int_{4\tau_n}^{+\infty}\psi(-2\lambda^{-n\alpha}\rho^{-1}(x-y))e^{-\lambda^{n\beta}\rho}
\bigg(\int_{x-\frac{\rho}{2}+\tau_n}^{x-\tau_n}\mathrm{d}z_1\bigg)\mathrm{d}\rho\nonumber\\
&=\int_{4\tau_n}^{+\infty}\psi(-2\lambda^{-n\alpha}\rho^{-1}(x-y))e^{-\lambda^{n\beta}\rho}\bigg(\frac{\rho}{2}-2\tau_n\bigg)\mathrm{d}\rho\nonumber\\
&=\int_{-\infty}^{-4\tau_n}\psi(2\lambda^{-n\alpha}\rho'^{-1}(x-y))e^{-\lambda^{n\beta}|\rho'|}\bigg(\frac{|\rho'|}{2}-2\tau_n\bigg)\mathrm{d}\rho'.
\end{align}
Applying Fubini's theorem in $I_2$, we obtain
\begin{align}\label{cased=1I2}
I_2&=\int_{4\tau_n}^{+\infty}\psi(2\lambda^{-n\alpha}\rho^{-1}(x-y))e^{-\lambda^{n\beta}\rho}
\bigg(\int_{x+\tau_n}^{x+\frac{\rho}{2}-\tau_n}\mathrm{d}z_1\bigg)\mathrm{d}\rho\nonumber\\&=\int_{4\tau_n}^{+\infty}
\psi(2\lambda^{-n\alpha}\rho^{-1}(x-y))e^{-\lambda^{n\beta}\rho}\bigg(\frac{\rho}{2}-2\tau_n\bigg)\mathrm{d}\rho.
\end{align}
Combining \eqref{cased=1decomp}, \eqref{cased=1I1} and \eqref{cased=1I2}, we get
$$\E(\psi(Z_n(x,y))) = \frac{\lambda^{2n\beta}}{\P(x\in {\mathcal O}_{n,n})}
\int_{|\rho|>4\tau_n}\psi(2\lambda^{-n\alpha}\rho^{-1}(x-y))e^{-\lambda^{n\beta}|\rho|}\bigg(\frac{|\rho|}{2}-2\tau_n\bigg)\mathrm{d}\rho.$$
Applying the change of variables $\rho=\rho_{t}=2\lambda^{-n\alpha}t^{-1}(x-y)$, we get that the density of $Z_n(x,y)$ is, for $|t|<\lambda^{-n\alpha}\frac{|x-y|}{2\tau_n}$,
\begin{equation}\label{eq:densitezncasD=1}
g_{Z_n}(t)=\frac{2\lambda^{2n\beta}}{\P(x\in {\mathcal O}_{n,n})}e^{-2\lambda^{n(\beta-\alpha)}\frac{|x-y|}{|t|}}
\left(\lambda^{-n\alpha}\frac{|x-y|}{|t|}-2\tau_n\right)\lambda^{-n\alpha}\frac{|x-y|}{t^2}.
\end{equation}
In particular,
$$g_{Z_n}(t)\leqslant\frac{2\lambda^{2n\beta}}{\P(x\in {\mathcal O}_{n,n})}\frac{\lambda^{-3n\beta+n\alpha}}{|x-y|}\sup_{r>0}(e^{-2r}r^3),$$
which shows \eqref{eq:norminfdenshn}.
~\\~\\
\noindent{\it Case $D\geqslant 2$}. We go back to \eqref{densiteinterm}.
For almost any $u\in\mathbb{S}^{D-1}$, there exists a unique $v\in \mathbb{S}^{D-1}\cap \{y-x\}^{\perp}$ and a unique $s=\bigl\langle u,\frac{y-x}{\|y-x\|}\bigl\rangle\in(-1,1)$ such that
\begin{equation*}
u=u_{s,v}=s\frac{y-x}{\|y-x\|}+\sqrt{1-s^2}\,v.
\end{equation*}
In particular, we can rewrite the uniform measure of ${\mathbb S}^{D-1}$ as
\begin{equation*}
\mathrm{d}\sigma_{D-1}(u)=(1-s^2)^{\frac{D-3}{2}}\mathrm{d}s\, d\sigma_{D-2}(v).
\end{equation*}

\noindent We thus get that $J_n(x,y,z_1,\rho,u)$ is also equal to
\begin{align*}
\iiint\psi\bigl(2\lambda^{-\frac{n\alpha }{D}}\|x-y\|s\rho^{-1}\bigl)e^{-\lambda^{n\beta}\cal{V}_n'(\rho u_{s,v})}\indic_{\R_+}(\langle x-y,u_{s,v}\rangle)\rho^{D-1}(1-s^2)^{\frac{D-3}{2}}\mathrm{d}\rho\,\mathrm{d}s\, d\sigma_{D-2}(v),
\end{align*}
the domain of integration for $\rho$ being $[2\tau_n+2\langle x-z_1,u_{s,v}\rangle,\infty)$.

\noindent We now proceed with the change of variables $\rho=\rho_t=2\lambda^{-\frac{n\alpha}{D}}\|x-y\|st^{-1}$ with $st>0$. We then deduce
that the density $g_{Z_n}(t)$ at point $t$ of $Z_n(x,y)$ conditionally on $\{x\in {\mathcal O}_{n,n}\}$ is given by
\begin{equation}\label{densitezn}
g_{Z_n}(t)=
\frac{\lambda^{2n\beta}}{\P(x\in {\mathcal O}_{n,n})}\iiint J_n'(x,y,z_1,t,s,v)\indic_{D_n}(x,y,z_1,t,s,v)\mathrm{d}s\mathrm{d}\sigma_{D-2}(v)\mathrm{d}z_1
\end{equation}
where
\begin{equation*}
J_n'(x,y,z_1,t,s,v)=e^{-\lambda^{n\beta}\cal{V}_n'\big(\frac{2\lambda^{-\frac{n\alpha}{D}}\|x-y\|\,s\,u_{s,v}}{t}\big)}
\bigg(\frac{2\lambda^{-\frac{n\alpha}{D}}\|x-y\|s}{t}\bigg)^D \,\frac{(1-s^2)^{\frac{D-3}{2}}}{t}
\end{equation*}
and
\begin{equation*}
D_n=\bigg\{(x,y,z_1,t,s,v): \|x-z_1\|>\tau_n \mbox{ and } 0\leqslant \langle x-z_1,u_{s,v}\rangle \leqslant \frac{\lambda^{-\frac{n\alpha}{D}}\|x-y\|s}{t}-\tau_n\bigg\}.
\end{equation*}

\noindent In the sequel, we only deal with the case $t>0$ but the same could be done likewise for $t<0$. We denote by $x'$ the intersection of the half-line $[z_1,x)$ with the boundary of the Voronoi cell of $z_1$.
Moreover, we write $z_1=x-\gamma w$ where $\gamma>\tau_n$ and $w\in{\mathbb S}^{D-1}$. In particular, we notice that
\begin{equation*}
\|x'-z_1\|=\frac{\rho}{2\langle w,u_{s,v}\rangle}=\frac{\lambda^{-\frac{n\alpha}{D}}\|x-y\|s}{t\langle w,u_{s,v}\rangle}.
\end{equation*}
We can now easily estimate the volume ${\mathcal V}_n'(\cdot)$ in the following way:
\begin{equation}\label{eq:minorvolume}
{\mathcal V}_n'(\cdot)\geqslant \vol(B_{\|x'-z_1\|}(x')) =\kappa_D\biggl(\frac{\lambda^{-\frac{n\alpha}{D}}\|x-y\|s}{t\langle w,u_{s,v}\rangle}\biggl)^D.
\end{equation}

\noindent We then proceed with the following change of variables: for almost any $w\in {\mathbb S}^{D-1}$, there exist a unique $\xi=\langle w,u_{s,v}\rangle \in[0,1)$ and a unique $\eta\in\mathbb{S}^{D-1}\cap \{u_{s,v}\}^{\perp}$ such that $w=w_{\xi,\eta}=\xi u_{s,v}+ \sqrt{1-\xi^2}\,\eta$ and
\begin{equation}\label{eq:changeofvariablemarcheD=2aussi}
\mathrm{d}z_1=\gamma^{D-1}\mathrm{d}\gamma\,\mathrm{d}\sigma_{D-1}(w)
=\gamma^{D-1}(1-\xi^2)^{\frac{D-3}{2}}\mathrm{d}\gamma\,\mathrm{d}\xi\,\mathrm{d}\sigma_{D-2}(\eta).
\end{equation}

\noindent In particular, when $(x,y,z_1,t,s,v)\in D_n$, we have
\begin{equation}\label{eq:intgamma}
0\leqslant\gamma=\frac{\langle x-z_1,u_{s,v}\rangle}{\xi}\leqslant \frac{\lambda^{-\frac{n\alpha}{D}}\|x-y\|s}{t\xi}.
\end{equation}
Consequently, for fixed $s,\xi \in (0,1)$, we have
\begin{equation}\label{eq:integraleengamma}
\iiint \indic_{D_n}\gamma^{D-1}\mathrm{d}\gamma\mathrm{d}\sigma_{D-2}(v)\mathrm{d}\sigma_{D-2}(\eta)\leqslant \frac{\omega_{D-2}^2}{D}
\left(\frac{\lambda^{-\frac{n\alpha}{D}}\|x-y\|s}{t\xi}\right)^D.
\end{equation}
We deduce from \eqref{densitezn}, \eqref{eq:minorvolume}, \eqref{eq:changeofvariablemarcheD=2aussi} and \eqref{eq:integraleengamma} that the density $g_{Z_n}(t)$ satisfies, for every $t>0$,
\begin{equation}\label{eq:dernierelignemarchepourtoutD}
g_{Z_n}(t) \leqslant \frac{\lambda^{2n\beta}\omega_{D-2}^2}{D\P(x\in {\mathcal O}_{n,n})}\int_0^1 \int_0^1 J''_n(x,y,t,s,\xi)\,\mathrm{d}s\,\mathrm{d}\xi
\end{equation}
where
\begin{equation*}\label{eq:dernierelignemarchepourtoutDbis}
J''_n(x,y,t,s,\xi)= e^{-\lambda^{n\beta}\kappa_D\bigl(\frac{\lambda^{-\frac{n\alpha}{D}}\|x-y\|s}{t\xi}\bigl)^D}
\frac{(\sqrt{2}\lambda^{-\frac{n\alpha}{D}}\|x-y\|s)^{2D}}{t^{2D+1}\xi^D}\,\left((1-s^2)(1-\xi^2)\right)^{\frac{D-3}{2}}.
\end{equation*}

\noindent{\it Subcase $D\geqslant 3$}. We then use the change of variables $s=s_{\tau}=\lambda^{\frac{n\alpha}{D}}\|x-y\|^{-1}t\xi\tau$. Using that $(1-s^2)(1-\xi^2)$ and $\xi$ are bounded by $1$, we obtain from the change of variables that
\begin{align*}
g_{Z_n}(t) &  \leqslant \frac{\lambda^{2n\beta}\omega_{D-2}^2}{D\P(x\in {\mathcal O}_{n,n})}\int_{0}^1\int_{\{\tau>0\}} \tau^D e^{-\lambda^{n\beta}\kappa_D\tau^D}\frac{(2\tau\xi)^D}{t}\frac{t\xi}{\lambda^{-\frac{n\alpha}{D}}\|x-y\|}\,\mathrm{d}\tau\,\mathrm{d}\xi \\
& \leqslant \frac{C}{\P(x\in {\mathcal O}_{n,n})}\frac{\lambda^{2n\beta+\frac{n\alpha}{D}}}{\|x-y\|}\int_{\{\tau>0\}}\tau^{2D}e^{-\lambda^{n\beta}\kappa_D\tau^D}\mathrm{d}\tau \\
& = \frac{C'}{\P(x\in {\mathcal O}_{n,n})}\frac{\lambda^{-\frac{n(\beta-\alpha)}{D}}}{\|x-y\|}
\end{align*}
where $C$ and $C'$ are two positive constants which only depend on $D$.
~\\~\\
\noindent {\it Subcase $D=2$}. We return to \eqref{eq:dernierelignemarchepourtoutD} and apply the same change of variables $s=s_{\tau}=\lambda^{\frac{n\alpha}{D}}\|x-y\|^{-1}t\xi\tau$. We now obtain that
\begin{equation}\label{eq:densiteD=2interm}
g_{Z_n}(t) \leqslant \frac{8\lambda^{2n\beta}\lambda^{\frac{n\alpha}{2}}}{\|x-y\|\P(x\in {\mathcal O}_{n,n})}\int_{\{\tau>0\}}\tau^4 e^{-\lambda^{n\beta}\pi\tau^2}\left(\cdots\right)
\mathrm{d}\tau
\end{equation}
where
$$\left(\cdots\right)
=\int_{\xi=0}^{1\wedge\frac{\lambda^{-\frac{n\alpha}{2}}\|x-y\|}{t\tau}}
(1-\xi)^{-\frac{1}{2}}\left(1-\frac{t\xi\tau}{\lambda^{-\frac{n\alpha}{2}}\|x-y\|}\right)^{-\frac{1}{2}}\mathrm{d}\xi.$$
We notice that there exists a positive constant $C>0$ such that for every $a>0$, we have
\begin{equation}\label{eq:lemmegrandiose}
\int_{\xi=0}^{1\wedge a}(1-\xi)^{-\frac{1}{2}}\bigg(1-\frac{\xi}{a}\bigg)^{-\frac{1}{2}}\mathrm{d}\xi\leqslant C|\log|a-1||.
\end{equation}
Indeed, due to the facts that the left-hand side of \eqref{eq:lemmegrandiose} is bounded for large $a$ and that the calculation is symmetric with respect to $1$,  it is enough to look for the behaviour of the Abelian-type integral when $a>1$ is close to $1$. A direct calculation shows then that
$$\int_0^1(1-\xi)^{-\frac{1}{2}}\left(1-\frac{\xi}{a}\right)^{-\frac{1}{2}}\mathrm{d}\xi=\sqrt{a}\;\mbox{argch}\left(\frac{a+1}{a-1}\right)\underset{a\to 1}{\sim}-\log(a-1),$$
which proves \eqref{eq:lemmegrandiose}. Consequently, we get from \eqref{eq:densiteD=2interm} and \eqref{eq:lemmegrandiose} that
\begin{align*}
g_{Z_n}(t)  & \leqslant \frac{C'\lambda^{2n\beta}\lambda^{\frac{n\alpha}{2}}}{\|x-y\|\P(x\in {\mathcal O}_{n,n})}\int_{\{\tau>0\}} \tau^4 e^{-\lambda^{n\beta}\pi\tau^2}\bigg|\log\bigg|1-\frac{\lambda^{-\frac{n\alpha}{2}}\|x-y\|}{t\tau}\bigg|\bigg|\;\mathrm{d}\tau,
\end{align*}
where $C'$ denotes again a positive constant.

\noindent We now fix $\varepsilon\in (0,1)$ and we split the integral:\\
- on the range of $\tau$ which satisfy  $|1-\frac{\lambda^{-\frac{n\alpha}{2}}\|x-y\|}{t\tau}|>\varepsilon$, the upper-bound is similar to the case $D\geqslant 3$;\\
- on the range of $\tau$ satisfying $|1-\frac{\lambda^{-\frac{n\alpha}{2}}\|x-y\|}{t\tau}|<\varepsilon$, the integral is bounded by
$$\frac{1}{(1-\varepsilon)^6}e^{-\lambda^{n\beta}\pi\frac{\lambda^{-n\alpha}\|x-y\|^2}{(1+\varepsilon)^2t^2}}
\left(\frac{\lambda^{-\frac{n\alpha}{2}}\|x-y\|}{t}\right)^5 \int_{u=1-\varepsilon}^{1+\varepsilon}|\log|1-u^{-1}|\mathrm{d}u\leqslant C' \varphi\left(\frac{\lambda^{-\frac{n\alpha}{2}}\|x-y\|}{t}\right)$$
where $\varphi(u)=e^{-\lambda^{n\beta}\pi\frac{u^2}{(1+\varepsilon)^2}}u^5$, $u>0$.

\noindent It remains to notice that the maximum of the function $\varphi$ is of order $\mathrm{O}(\lambda^{-\frac{5}{2}n\beta})$ to deduce the required result \eqref{eq:norminfdenshn}.
\end{proof}

\noindent We conclude this subsection with the integrability of the Lipschitz constant $L(x)$ of the affine part of $\Delta_0$ above $x$, i.e.
\begin{equation}\label{eq:lipvoisins}
L(x)=\frac{2}{\|c_0(x)-c_0'(x)\|}.
\end{equation}
We define the new set $\widetilde{\mathcal{O}}_{n,N}$ as
$$\widetilde{\mathcal{O}}_{n,N}=\bigg\{x\in [0,1]^D : \mbox{all points of $B_{\lambda^{\frac{n\beta}{D}}\tau_N}(x)$ are in the same simplex of ${\mathcal S}_0$ as $x$} \bigg\}.$$

\begin{prop}\label{prop:lipconst}
For every $x\in [0,1]^D$, $\E(L(x))<\infty$ and $\displaystyle\sup_{0\leqslant n\leqslant N}\E(L(x)|\,x\in \widetilde{\mathcal{O}}_{n,N})<\infty$.
\end{prop}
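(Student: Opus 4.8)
The plan is to derive both claims from a single small-ball estimate for $L_0(x)=\|c_0(x)-c_0'(x)\|$, the denominator of $L(x)=2/L_0(x)$. By the translation invariance of ${\mathcal X}_0$ and ${\mathcal T}_0$ (cf.\ \eqref{eq:stationarity}), none of the quantities below depends on $x$, so I take $x=0$. The key point is geometric: if $L_0(0)\leqslant t$, then $c_0'(0)$ is a point of ${\mathcal X}_0$ lying in the half-ball $B_t(c_0(0))\cap\{y:\langle y-c_0(0),-c_0(0)\rangle>0\}$, i.e.\ in the half of $B_t(c_0(0))$ lying, with respect to the hyperplane through $c_0(0)$ orthogonal to $[c_0(0),0]$, on the side of $0$ --- indeed, by the very definition of the secondary nucleus, $c_0'(0)$ lies on the far side of that hyperplane relative to $c_0(0)$, and here $\|c_0'(0)-c_0(0)\|\leqslant t$. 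Conditionally on $c_0(0)=c$ with $\|c\|=\rho$, Slivnyak's theorem together with the void condition characterising the nearest nucleus shows that ${\mathcal X}_0\setminus\{c\}$ is a unit-intensity Poisson process on $B_\rho(0)^c$; as the above half-ball has volume at most $\kappa_Dt^D/2$, the probability that it is charged is at most $1-e^{-\kappa_Dt^D/2}\leqslant\kappa_Dt^D/2$, uniformly in $c$, whence $\P(L_0(0)\leqslant t)\leqslant\kappa_Dt^D/2$ for all $t>0$. For $D\geqslant2$ this is enough; for $D=1$ the exponent $1$ is exactly critical below, and one sharpens the estimate to $\P(L_0(0)\leqslant t)\leqslant t^2/2$ by noting that in dimension one the relevant half-ball is entirely contained in the exclusion ball $B_\rho(0)$ unless $\rho<t/2$ and then integrating against the law of $\rho=\|c_0(0)\|$ (equivalently, $L_0(0)$ is the length of the Poisson interval straddling $0$, which follows the size-biased exponential law $\Gamma(2,1)$, so that $\E(L(0))=2$ outright).

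Granting an estimate $\P(L_0(0)\leqslant t)\leqslant Ct^{\gamma}$ on $(0,1]$ with $\gamma>1$ (take $\gamma=D$ for $D\geqslant2$ and $\gamma=2$ for $D=1$), the first assertion follows from the layer-cake identity
\begin{equation*}
\E(L(x))=\E\!\left(\frac{2}{L_0(0)}\right)=2\int_0^{\infty}t^{-2}\,\P(L_0(0)<t)\,\dd t ,
\end{equation*}
which converges at $0$ because $\gamma>1$ and at $\infty$ because $\P(L_0(0)<t)\leqslant1$. An alternative route to the same estimate would be a Mecke-type computation of the joint law of $(c_0(0),c_0'(0))$, as in the proof of Proposition~\ref{prop:denshn} but without the conditioning on an oscillation set.

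For the conditional bound, for $0\leqslant n\leqslant N$ write
\begin{equation*}
\E\bigl(L(x)\mid x\in\widetilde{\mathcal O}_{n,N}\bigr)=\frac{\E\bigl(L(x)\,\indic_{\{x\in\widetilde{\mathcal O}_{n,N}\}}\bigr)}{\P(x\in\widetilde{\mathcal O}_{n,N})}\leqslant\frac{\E(L(x))}{\P(x\in\widetilde{\mathcal O}_{n,N})} .
\end{equation*}
The radius $\lambda^{\frac{n\beta}{D}}\tau_N=\lambda^{\frac{n\beta-NH}{D}}$ defining $\widetilde{\mathcal O}_{n,N}$ is, since $H>\beta$ and $0\leqslant n\leqslant N$, largest --- and equal to $1$ --- precisely for $(n,N)=(0,0)$; enlarging the radius shrinks the set, so $\widetilde{\mathcal O}_{0,0}\subseteq\widetilde{\mathcal O}_{n,N}$ and hence $\P(x\in\widetilde{\mathcal O}_{n,N})\geqslant\P(x\in\widetilde{\mathcal O}_{0,0})$. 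Since $\{x\in\widetilde{\mathcal O}_{0,0}\}$ is a positive-probability local event for the Poisson--Voronoi tessellation --- it occurs, for instance, when ${\mathcal X}_0$ has a single, suitably placed nucleus in a large ball around $x$ and none beyond it, which forces $B_1(x)$ into one simplex of ${\mathcal S}_0$ --- the bound $\E(L(x))/\P(x\in\widetilde{\mathcal O}_{0,0})$ is finite and independent of $n$ and $N$, which yields the claimed supremum.

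The one genuinely delicate step is the small-ball estimate in low dimension: the elementary half-ball count gives exponent $D$, comfortable for $D\geqslant3$, just sufficient for $D=2$ (since $t^D\leqslant t^2$ on $(0,1]$), and exactly critical for $D=1$, where one must exploit that the half-ball is essentially absorbed by the exclusion ball around $0$ to gain the missing power of $t$; everything else --- the Palm conditioning on the nearest nucleus, the layer-cake integration, the monotonicity of $\widetilde{\mathcal O}_{n,N}$ in its radius, and the positivity of $\P(x\in\widetilde{\mathcal O}_{0,0})$ --- is routine.
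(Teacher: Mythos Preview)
Your argument is correct and takes a genuinely different route from the paper. For $\E(L(x))<\infty$, the paper uses Mecke--Slivnyak's formula to write $\E(L(x))$ as a double integral over $(z_1,z_2)$, then performs several changes of variables (separately for $D=1$, $D=2$, $D\geqslant 3$) to show convergence; you instead extract a single small-ball estimate $\P(L_0(0)\leqslant t)\leqslant Ct^{\gamma}$ with $\gamma>1$ via Palm conditioning on the nearest nucleus, and conclude by the layer-cake formula. Your approach is more elementary and more unified across dimensions, at the cost of giving less explicit information (the paper's computation essentially yields the density of $L_0(0)$). For the conditional bound, the paper argues by stochastic monotonicity that conditioning on $\{x\in\widetilde{\mathcal O}_{n,N}\}$ favors flatter faces, hence $\E(L(x)\mid x\in\widetilde{\mathcal O}_{n,N})\leqslant\E(L(x))$ outright; your cruder ratio bound $\E(L(x))/\P(x\in\widetilde{\mathcal O}_{0,0})$ is weaker but perfectly sufficient. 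One small wording issue: your justification that $\P(x\in\widetilde{\mathcal O}_{0,0})>0$ via ``a single nucleus in a large ball and none beyond it'' is imprecise for an infinite Poisson process; the cleaner statement is that with positive probability the simplex of ${\mathcal S}_0$ containing $x$ also contains $B_1(x)$ (e.g.\ force one nucleus at moderate distance from $x$ and no others in a much larger ball), which is a local event of positive probability.
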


\begin{proof}
We could deal with the conditional distribution of $L(x)$ in the same spirit as in the proof of Proposition \ref{prop:denshn}. The conditional density of $\|c_0(x)-c_0'(x)\|$ would be in particular very close to \eqref{densityLn}. For sake of simplicity, we choose to use a direct argument for removing the conditioning. Indeed, we notice the following fact: on the event $\{x\in \widetilde{\mathcal{O}}_{n,N}\}$, a vicinity of $x$ is in the same simplex of ${\mathcal S}_0$ which means that the conditioning favors flatter pyramid faces above $x$ and smaller Lipschitz constants $L(x)$. Consequently, we have $\E(L(x)\,|\,x\in \widetilde{\mathcal O}_{n,N})\leqslant\E(L(x))$ for every $n,N\geqslant 0$.

\noindent For $D=1$, the integrability of the variable $L(x)$ given by \eqref{eq:lipvoisins} comes from the fact that the distance from the two neighbors of $x$ (the nearest and second nearest) is Gamma-distributed. When $D\geqslant 2$, we use a reasoning similar to the proof of Proposition \ref{prop:denshn} to obtain that
\begin{align}
\E(L(x)) & =\E\biggl(\,\sum_{z_1\ne z_2} \frac{2}{\|z_1-z_2\|}\indic_{(c_n(x),c_n'(x))}(z_1,z_2)\biggl)\nonumber\\
 & =\iint \frac{2}{\|z_1-z_2\|}\P((x+\R_+(x-z_1))\cap H_{z_1,z_2}\in {\mathcal C}_{z_1}\cap {\mathcal C}_{z_2}) \mathrm{d}z_1\mathrm{d}z_2.\label{eq:calculL(x)}
\end{align}
We write $z_2=z_1+\rho u$ where $u\in{\mathbb S}^{D-1}$, $\rho>0$ and $u=s\frac{x-z_1}{\|x-z_1\|}+\sqrt{1-s^2}v$
where $s\in (0,1)$ and $v\in {\mathbb S}^{D-1}\cap \{x-z_1\}^{\perp}$. In particular, the distance from $z_1$ to the point $(x+\R_+(x-z_1))\cap H_{z_1,z_2}$ is $\frac{\rho}{2s}$. Consequently, we deduce from a change of variables applied to the integral in \eqref{eq:calculL(x)} that
$$
\E(L(x))
=\iint\int_{s=0}^1\frac{2\omega_{D-2}}{\rho}
{\indic}_{\R_+}(\rho-2\|x-z_1\|s)
e^{-\kappa_D\left(\frac{\rho}{2s}\right)^D}\rho^{D-1}(1-s^2)^{\frac{D-3}{2}}\mathrm{d}s\,\mathrm{d}\rho\,\mathrm{d}z_1.
$$
When $D\geqslant 3$, we proceed with the change of variables $\tau=\tau_s=\frac{\rho}{2s}$. There is a constant $C>0$ such that
\begin{align*}
\E(L(x)) &\leqslant \omega_{D-2}\int_{\rho=0}^{\infty}\rho^{D-1}\int_{\tau=\frac{\rho}{2}}^{\infty}\bigg(\int_{B(x,\tau)}\mathrm{d}z_1\bigg)e^{-\kappa_D \tau^D}\tau^{-2}\mathrm{d}\tau\mathrm{d}\rho\\
& \leqslant C\int_{\rho=0}^{\infty}\rho^{D-1}\int_{\tau=\frac{\rho}{2}}^{\infty}\tau^{D-2}e^{-\kappa_D\tau^D}\mathrm{d}\tau\mathrm{d}\rho\\
& \leqslant C\int_{\rho=0}^{\infty}\rho^{D-1}e^{-\frac{\kappa_D}{2}\left(\frac{\rho}{2}\right)^D}\mathrm{d}\rho
\;\int_{\tau=0}^{\infty}\tau^{D-2}e^{-\frac{\kappa_D}{2}\,\tau^D}\mathrm{d}\tau<\infty.
\end{align*}
Finally, when $D=2$, with the same change of variables, we get
\begin{equation}\label{eq:constlipcasdim2}
\E(L(x))\leqslant C\int_{\rho=0}^{\infty}\rho\int_{\tau=\frac{\rho}{2}}^{\infty}e^{-\pi\tau^2}\frac{\mathrm{d}\tau\mathrm{d}\rho}{\sqrt{1-\left(\frac{\rho}{2\tau}\right)^2}}.
\end{equation}
We treat separately the integral in $\tau$ for fixed $\rho>0$ :
\begin{align}
\int_{\tau=\frac{\rho}{2}}^{\infty}e^{-\pi\tau^2}\frac{\mathrm{d}\tau}{\sqrt{1-\left(\frac{\rho}{2\tau}\right)^2}}
& \leqslant 2\rho e^{-\pi\left(\frac{\rho}{2}\right)^2}\int_{\tau=\frac{\rho}{2}}^{\rho}\frac{\rho}{2\tau^2}\frac{\mathrm{d}\tau}{\sqrt{1-\left(\frac{\rho}{2\tau}\right)^2}}
+\sqrt{\frac{4}{3}}\int_{\tau=\rho}^{\infty}e^{-\pi\tau^2}\mathrm{d}\tau\nonumber\\
& \leqslant 2\rho e^{-\pi\left(\frac{\rho}{2}\right)^2}\left[\arccos\left(\frac{\rho}{2u}\right)\right]_{\frac{\rho}{2}}^{\rho}
+\sqrt{\frac{4}{3}}e^{-\frac{\pi}{2}\rho^2}\int_{\tau=0}^{\infty}e^{-\frac{\pi}{2}\tau^2}\mathrm{d}\tau\nonumber\\
& \leqslant \frac{2\pi}{3}\rho e^{-\pi\left(\frac{\rho}{2}\right)^2} + C'e^{-\frac{\pi}{2}\rho^2},\label{eq:intapart}
\end{align}
where $C'$ is a positive constant. Inserting \eqref{eq:intapart} in \eqref{eq:constlipcasdim2}, we get the required result.
\end{proof}

\noindent

\subsection{Size of the increments of $F$}\label{subsec:inccond}

\pass

\noindent The box-dimension of $\Gamma$, as well as its Hausdorff dimension, is closely related to the oscillations of $F$ (see \cite{dubtri,fal03}). Let us recall that, for every $A\subset [0,1]^D$, the oscillation of $F$ over $A$ is defined by
\begin{equation}\label{eq:defoscA}
\osc(F,A)=\sup_{y,y'\in A} |F(y')-F(y)|.
\end{equation}
In particular, we will consider, for all $x\in[0,1]^D$ and $\tau>0$, the oscillation of $F$ over the cube $x+[0,\tau]^D$ given by \begin{equation}\label{eq:defosc}
\osc_{\tau}(x)=\osc(F,x+[0,\tau]^D)=\sup_{y,y'\in \, x+[0,\tau]^D} |F(y')-F(y)|.
\end{equation}

\begin{prop}\label{prop:estimoscill}
Let $0<p<\frac{\alpha}{H}$. Then, for every $x\in[0,1]^D$, we have, when $N\to \infty$,
\begin{equation*}\label{eq:estimoscill}
\E(\osc_{\tau_N}(x))=\E(\osc_{\tau_N}(0))= \mathrm{O}(\tau_N^p).
\end{equation*}
\end{prop}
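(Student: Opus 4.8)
The plan is to bound the oscillation of $F$ by the sum of the oscillations of its terms, to estimate the level-$n$ oscillation over a cube of side $\tau_N$ in two different regimes according to whether the cube is ``small'' or ``large'' at scale $n$, and then to balance the resulting exponents. The equality $\E(\osc_{\tau_N}(x))=\E(\osc_{\tau_N}(0))$ is immediate: the law of the whole family $(\Delta_n)_{n\geqslant0}$, hence of $F$, is invariant under translations of $\R^D$ since the processes $\mathcal{X}_n$ are stationary, so $\osc(F,x+[0,\tau_N]^D)$ and $\osc(F,[0,\tau_N]^D)=\osc_{\tau_N}(0)$ have the same distribution, in particular the same expectation.

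For the bound I would start from the subadditive estimate
\[
\osc_{\tau_N}(x)\leqslant\sum_{n=0}^{\infty}\lambda^{-\frac{n\alpha}{D}}\,\osc\big(\Delta_n,\,x+[0,\tau_N]^D\big),
\]
and fix a constant $c_0=c_0(D,H,\lambda)$ large enough that $\sqrt{D}\,\tau_N\leqslant\tau_{N-c_0}$ for $N\geqslant c_0$, so that $x+[0,\tau_N]^D\subset B_{\tau_{N-c_0}}(x)$. For $0\leqslant n\leqslant N-c_0$ I split according to the event $\{x\in\mathcal{O}_{n,N-c_0}\}$. On this event the whole cube lies in a single simplex of $\mathcal{S}_n$, on which $\Delta_n$ is affine with gradient of Euclidean norm $L_n(x)=2/\|c_n(x)-c_n'(x)\|$, hence $\osc(\Delta_n,x+[0,\tau_N]^D)\leqslant\sqrt{D}\,\tau_N\,L_n(x)$; on the complementary event I use the trivial bound $\osc(\Delta_n,\cdot)\leqslant1$ together with Proposition \ref{prop:size}(i), which gives $\P(x\notin\mathcal{O}_{n,N-c_0})=\mathrm{O}(\lambda^{\frac{n\beta-NH}{D}})$. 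The scaling invariance $\lambda^{\frac{n\beta}{D}}\mathcal{T}_n\overset{\mathrm{law}}{=}\mathcal{T}_0$ identifies the joint law of $\big(\indic_{\{x\in\mathcal{O}_{n,N-c_0}\}},L_n(x)\big)$ with that of $\big(\indic_{\{x'\in\widetilde{\mathcal{O}}_{n,N-c_0}\}},\lambda^{\frac{n\beta}{D}}L(x')\big)$, so Proposition \ref{prop:lipconst} ($\E(L(x'))<\infty$) gives $\E\big(L_n(x)\indic_{\{x\in\mathcal{O}_{n,N-c_0}\}}\big)\leqslant C\lambda^{\frac{n\beta}{D}}$. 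Since $\tau_N=\lambda^{-\frac{NH}{D}}$, combining the two estimates yields $\E\big(\osc(\Delta_n,x+[0,\tau_N]^D)\big)=\mathrm{O}(\lambda^{\frac{n\beta-NH}{D}})$ for $n\leqslant N-c_0$, while for $n>N-c_0$ I keep the bound $1$.

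Summing, and using $\lambda^{-\frac{n\alpha}{D}}\lambda^{\frac{n\beta-NH}{D}}=\lambda^{-\frac{NH}{D}}\lambda^{\frac{n(\beta-\alpha)}{D}}$ together with $\sum_{n>N-c_0}\lambda^{-\frac{n\alpha}{D}}=\mathrm{O}(\lambda^{-\frac{N\alpha}{D}})$, one gets
\[
\E(\osc_{\tau_N}(x))=\mathrm{O}\Big(\lambda^{-\frac{NH}{D}}\sum_{n=0}^{N}\lambda^{\frac{n(\beta-\alpha)}{D}}\Big)+\mathrm{O}\big(\lambda^{-\frac{N\alpha}{D}}\big)=\mathrm{O}\big((N+1)\lambda^{\frac{N(\beta-\alpha-H)}{D}}\big)+\mathrm{O}\big(\lambda^{-\frac{N\alpha}{D}}\big),
\]
where $\alpha\leqslant\beta$ is used to bound the geometric-type sum by $(N+1)\lambda^{\frac{N(\beta-\alpha)}{D}}$. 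Now $\tau_N^p=\lambda^{-\frac{NHp}{D}}$, and the hypothesis $p<\frac{\alpha}{H}$ gives $Hp<\alpha$, hence $\lambda^{-\frac{N\alpha}{D}}=\mathrm{O}(\tau_N^p)$; moreover $H>\beta$ gives $Hp<\alpha<\alpha+H-\beta$, hence $\beta-\alpha-H<-Hp$ strictly, so $(N+1)\lambda^{\frac{N(\beta-\alpha-H)}{D}}=\mathrm{O}(\tau_N^p)$ as well, the strict inequality absorbing the polynomial factor $N+1$. This is the asserted bound.

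The routine parts are the subadditivity estimate, the affine-on-a-simplex argument and the final arithmetic. The genuine content, and the step I expect to need the most care, is the scaling reduction transferring the (conditional) integrability of the level-$0$ slope $L$ from Proposition \ref{prop:lipconst} to level $n$ with the correct factor $\lambda^{n\beta/D}$: this is precisely what makes the ``small-$n$'' contribution $\mathrm{O}(\lambda^{(n\beta-NH)/D})$ instead of merely $\mathrm{O}(1)$, and it is essential that $H>\beta$ so that the borderline term $\lambda^{N(\beta-\alpha-H)/D}$ is still $\mathrm{O}(\tau_N^p)$ for every $p<\alpha/H$.
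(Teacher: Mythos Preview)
Your proof is correct and follows essentially the same route as the paper: subadditivity of the oscillation, splitting the sum at $n\approx N$, using the Lipschitz bound together with Propositions \ref{prop:size}(i) and \ref{prop:lipconst} for small $n$, and the trivial bound $\osc(\Delta_n,\cdot)\leqslant1$ for large $n$. The only cosmetic differences are that the paper first establishes an almost sure bound via Borel--Cantelli before passing to expectations (a detour you rightly bypass), uses the conditional part of Proposition \ref{prop:lipconst} rather than the unconditional $\E(L)<\infty$, and handles the factor $\sqrt{D}$ by redefining $\mathcal{O}_{n,N}$ with radius $\sqrt{D}\,\tau_N$ instead of introducing your shift $c_0$.
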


\begin{proof}
Let us write
\begin{equation*}
\delta_N=\sup_{x\in [0,\tau_N]^D} |F(x)-F(0)|. 
\end{equation*}

\noindent We claim that
\begin{equation}\label{eq:resultBC}
\P\left(\liminf \left\{\delta_N\leqslant \tau_N^p\right\}\right)=1.
\end{equation}

\noindent By Markov's inequality,
\begin{equation}\label{eq:majmarkov}
\P\left(\delta_N\geqslant \tau_N^p\right)\leqslant \tau_N^{-p}\,\E\left(\delta_N\right).
\end{equation}

\noindent We can write
\begin{equation*}
\E\left(\delta_N\right)\leqslant \E\biggl(\,\sum_{n=0}^{\infty}\,\sup_{x\in [0,\tau_N]^D} |Z_n(x,0)|\biggl) = S_1(N)+S_2(N)+S_3(N)
\end{equation*}
with
\begin{align*}
& S_1(N) = \sum_{n=0}^N\E\biggl(\,\sup_{x\in [0,\tau_N]^D}|Z_n(x,0)|\,\biggl|\,0\in {\mathcal O}_{n,N}\biggl)\P(0\in {\mathcal O}_{n,N}), \\
& S_2(N) = \sum_{n=0}^{N}\E\biggl(\,\sup_{x\in [0,\tau_N]^D}|Z_n(x,0)|\,\biggl|\,0\not\in {\mathcal O}_{n,N}\biggl)\P(0\not\in {\mathcal O}_{n,N}), \\
& S_3(N) = \sum_{n=N+1}^{\infty}\E\biggl(\,\sup_{x\in [0,\tau_N]^D}|Z_n(x,0)|\biggl).
\end{align*}
Since $[0,\tau_n]^D\subset B_{\sqrt{D}\tau_N}(0)$, we mention that for the purpose of this proof, the definition \eqref{eq:randomset} of the set ${\mathcal O}_{n,N}$ should be slightly adapted by substituting $\sqrt{D}\tau_N$ for $\tau_N$. For sake of simplicity, we omit that technical detail.

\noindent For $S_1(N)$, we notice that
\begin{equation*}
\E\biggl(\,\sup_{x\in [0,\tau_N]^D}|Z_n(x,0)|\,\biggl|\,0\in {\mathcal O}_{n,N}\biggl)\leqslant (\lambda^{-\frac{n\alpha}{D}}\sqrt{D}\,\tau_N)(\lambda^{\frac{n\beta}{D}}\,\E(L(0)\,|\,0\in \widetilde{{\mathcal O}}_{n,N}))
\end{equation*}
where $L(0)$ is the Lipschitz constant of $\Delta_0$ at $0$ when the underlying Poisson point process is homogeneous of intensity $1$. Indeed, the function $\Delta_n$ is Lipschitz above $0$  with Lipschitz constant equal in distribution to $\lambda^{\frac{n\beta}{D}}L(0)$ by scaling invariance. Moreover, the distribution of that Lipschitz constant conditional on $\{0\in {\mathcal O}_{n,N}\}$ is the same as the distribution of $\lambda^{\frac{n\beta}{D}}L(0)$ conditional on $\{0 \in \widetilde{{\mathcal O}}_{n,N}\}$. Using \eqref{eq:defZn}, we get the inequality above. Thanks to Proposition \ref{prop:lipconst},
we have $\sup_{n,N\in\N}\E(L(0)\,|\,0\in \widetilde{{\mathcal O}}_{n,N})<\infty$.

\noindent Thus, using $0<\alpha\leqslant\beta\leqslant 1$, we obtain
\begin{equation*}
S_1(N) \leqslant \sum_{n=0}^N \lambda^{-\frac{n\alpha}{D}}\lambda^{-\frac{NH}{D}}\lambda^{\frac{n\beta}{D}}\,\sqrt D\,\sup_{n,N\in\N}\E(L(0)\,|\,0\in \widetilde{{\mathcal O}}_{n,N}) \leqslant \left\{
                      \begin{array}{ll}
                       C_1\,\lambda^{\frac{(\beta-\alpha-H)N}{D}} & \hbox{if $\alpha<\beta$} \\
                       C'_1\,N\lambda^{\frac{-NH}{D}} & \hbox{if $\alpha=\beta$}
                      \end{array}
                    \right.
\end{equation*}
where $C_1,C'_1$ are two positive constants which do not depend on $N$.

\noindent For $S_2(N)$ we use the upper estimate (see \eqref{eq:defZn})
\begin{equation*}
\E\biggl(\,\sup_{x\in [0,\tau_N]^D}|Z_n(x,0)|\,\biggl|\,0\not\in {\mathcal O}_{n,N}\biggl)\leqslant \lambda^{-\frac{n\alpha}{D}}
\end{equation*}
and Proposition \ref{prop:size}(i) to get
\begin{equation*}
S_2(N) \leqslant \sum_{n=0}^N \lambda^{-\frac{n\alpha}{D}}\lambda^{\frac{n\beta-NH}{D}}\leqslant \left\{
                      \begin{array}{ll}
                       C_2\,\lambda^{\frac{(\beta-\alpha-H)N}{D}} & \hbox{if $\alpha<\beta$} \\
                       C'_2\,N\lambda^{\frac{-NH}{D}} & \hbox{if $\alpha=\beta$}
                      \end{array}
                    \right.
\end{equation*}
where $C_2,C'_2$ are two positive constants which do not depend on $N$.

\noindent Finally, for $S_3(N)$ we only use the upper estimate
\begin{equation*}
\E\biggl(\,\sup_{x\in [0,\tau_N]^D}|Z_n(x,0)|\biggl)\leqslant \lambda^{-\frac{n\alpha}{D}}
\end{equation*}
to get
\begin{align*}
S_3(N) \leqslant \sum_{n=N+1}^{\infty} \lambda^{-\frac{n\alpha}{D}} \leqslant C_3\, \lambda^{-\frac{N\alpha}{D}}
\end{align*}
where $C_3>0$ is a positive constant which does not depend on $N$.

\noindent Therefore,
\begin{equation*}
\E\biggl(\,\sum_{n=0}^{\infty}\,\sup_{x\in [0,\tau_N]^D} |Z_n(x,0)|\biggl) \leqslant \left\{
                      \begin{array}{ll}
                       C\bigl(\tau_N^{1-\frac{\beta-\alpha}{H}} + \tau_N^{\frac{\alpha}{H}}\bigl) & \hbox{if $\alpha<\beta$} \\
                       C'\bigl(|\log(\tau_N)|\tau_N +\tau_N^{\frac{\alpha}{H}}\bigl) & \hbox{if $\alpha=\beta$}
                      \end{array}
                    \right.
\end{equation*}
where $C,C'$ are two positive constants which do not depend on $N$. The rhs of \eqref{eq:majmarkov} is in particular summable in $N$ as soon as $p<\frac{\alpha}{H}$ (which guarantees that $1-\frac{\beta-\alpha}{H}>p$ since $1-\frac{\beta}{H}>0$). Consequently, by Borel-Cantelli's lemma, \eqref{eq:resultBC} holds. Then, we obtain
\begin{equation*}
\osc_{\tau_N}(0) \leqslant 2\delta_N = \mathrm{O}(\tau_N^p)
\end{equation*}
almost surely for all $N$ large enough. To conclude, let us notice that, by stationarity, it follows that
\begin{equation*}
\E(\osc_{\tau_N}(x))=\E(\osc_{\tau_N}(0))= \mathrm{O}(\tau_N^p)
\end{equation*}
for all $x\in[0,1]^D$.
\end{proof}

\noindent We conclude this subsection with an estimate of the expectation of a particular functional of the increment $F(x)-F(y)$ that will appear in the application of a Frostman-type lemma in the next section.

\begin{prop}\label{prop:majespercond}
Let $s>1$ and $n\geqslant 0$. If $x,y\in[0,1]^D$ satisfy $\tau_{n+1}<\|x-y\|\leqslant\tau_n$ then
\begin{equation}\label{eq:esperincZ}
\E\left((|F(x)-F(y)|^2+\|x-y\|^2)^{-\frac{s}{2}}\indicat_{{\mathcal O}_{n,n}}(x)\right) \leqslant C \|x-y\|^{-s+\frac{\beta-\alpha}{H}}
\end{equation}
where $C>0$ is a constant which does not depend on $x$, $y$, $n$.
\end{prop}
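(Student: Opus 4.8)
The plan is to use the splitting $F(x)-F(y)=Z_n(x,y)+S_n(x,y)$ from \eqref{eq:Sn} and to integrate out the term $Z_n(x,y)$ by means of its conditional density. Write $r=\|x-y\|$ and note that the hypothesis $\tau_{n+1}<r\leqslant\tau_n$ puts us exactly in the range where Proposition~\ref{prop:denshn} applies. Since $Z_n(x,y)$ and the event $\{x\in{\mathcal O}_{n,n}\}$ are both measurable with respect to ${\mathcal X}_n$, while $S_n(x,y)$ is measurable with respect to $({\mathcal X}_m)_{m\neq n}$, the pair $(Z_n(x,y),\indicat_{{\mathcal O}_{n,n}}(x))$ is independent of $S_n(x,y)$, as already recorded in Section~\ref{subsec:notations}. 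First I would condition on $S_n(x,y)$ and use that, conditionally on $\{x\in{\mathcal O}_{n,n}\}$, the variable $Z_n(x,y)$ has density $g_{Z_n}$, which gives
\begin{equation*}
\E\!\left((|F(x)-F(y)|^2+r^2)^{-\frac s2}\indicat_{{\mathcal O}_{n,n}}(x)\right)=\P(x\in{\mathcal O}_{n,n})\,\E\!\left(\int_{\R}\bigl((t+S_n(x,y))^2+r^2\bigr)^{-\frac s2}g_{Z_n}(t)\,\dd t\right).
\end{equation*}

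Next I would insert the pointwise bound \eqref{eq:norminfdenshn}, $g_{Z_n}(t)\leqslant C\,\P(x\in{\mathcal O}_{n,n})^{-1}r^{-1}\lambda^{-\frac{n(\beta-\alpha)}{D}}$, which cancels the prefactor $\P(x\in{\mathcal O}_{n,n})$; since the remaining integrand is nonnegative this yields
\begin{equation*}
\E\!\left((|F(x)-F(y)|^2+r^2)^{-\frac s2}\indicat_{{\mathcal O}_{n,n}}(x)\right)\leqslant C\,r^{-1}\lambda^{-\frac{n(\beta-\alpha)}{D}}\,\E\!\left(\int_{\R}\bigl((t+S_n(x,y))^2+r^2\bigr)^{-\frac s2}\,\dd t\right).
\end{equation*}
By translation invariance of Lebesgue measure the inner integral no longer depends on $S_n(x,y)$ and equals $\int_{\R}(u^2+r^2)^{-s/2}\,\dd u=r^{1-s}\int_{\R}(1+v^2)^{-s/2}\,\dd v$ after the substitution $u=rv$; the constant $\int_{\R}(1+v^2)^{-s/2}\,\dd v$ is finite precisely because $s>1$. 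This is the only place where the hypothesis $s>1$ enters, and it is essentially the sole analytic point of the argument; everything else is bookkeeping.

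Finally I would convert the scale factor into a power of $r$. From $\tau_{n+1}<r$, i.e.\ $\lambda^{-\frac{(n+1)H}{D}}<r$, one gets $\lambda^{-\frac nD}<\lambda^{\frac1D}r^{\frac1H}$, hence, using $\beta-\alpha\geqslant 0$ (this is exactly where $\alpha\leqslant\beta$ is used), $\lambda^{-\frac{n(\beta-\alpha)}{D}}\leqslant\lambda^{\frac{\beta-\alpha}{D}}r^{\frac{\beta-\alpha}{H}}$. Combining the three estimates gives the upper bound $C\,r^{-1}\cdot r^{\frac{\beta-\alpha}{H}}\cdot r^{1-s}=C\,r^{-s+\frac{\beta-\alpha}{H}}$ with a constant $C$ depending only on $D$, $s$, $\lambda$, $\alpha$, $\beta$, hence independent of $x$, $y$ and $n$; this is \eqref{eq:esperincZ}. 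I do not expect any genuine obstacle: the only care needed is the measurability and independence justification used to extract the conditional density, and keeping track that the constant stays uniform in $x$, $y$, $n$.
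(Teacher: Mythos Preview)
Your proof is correct and follows essentially the same approach as the paper: decompose $F(x)-F(y)=Z_n(x,y)+S_n(x,y)$, use the independence of $(Z_n,\indicat_{{\mathcal O}_{n,n}}(x))$ from $S_n$ together with the conditional density bound \eqref{eq:norminfdenshn}, evaluate the remaining one-dimensional integral, and convert $\lambda^{-n(\beta-\alpha)/D}$ into a power of $\|x-y\|$ via $\tau_{n+1}<\|x-y\|$. The only cosmetic difference is that the paper splits the $u$-integral into the regions $\{|u+v|<\|x-y\|\}$ and $\{|u+v|>\|x-y\|\}$, whereas your direct substitution $u=rv$ computing $\int_{\R}(u^2+r^2)^{-s/2}\,\dd u=r^{1-s}\int_{\R}(1+v^2)^{-s/2}\,\dd v$ is in fact slightly cleaner.
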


\begin{proof}
Remember that $F(x)-F(y)=Z_n(x,y)+S_n(x,y)$ where $Z_n(x,y)$ and $S_n(x,y)$ are independent. Let $\P_n$ be the probability associated with ${\mathcal X}_n$ and $\mu_{S_n}$ be the probability distribution of the random variable $S_n(x,y)$. From Proposition \ref{prop:denshn} one obtains
\begin{align*}
\E &\left((|F(x)-F(y)|^2 + \|x-y\|^2)^{-\frac{s}{2}}\indicat_{{\mathcal O}_{n,n}}(x)\right) \\
& = \iint ((Z_n(x,y)+v)^2+\|x-y\|^2)^{-\frac{s}{2}} \indicat_{{\mathcal O}_{n,n}}(x) \dd \P_n \dd \mu_{S_n}(v)\\
& = \iint \P(x\in {\mathcal O}_{n,n})((u+v)^2+\|x-y\|^2)^{-\frac{s}{2}} g_{Z_n}(u)\dd u \dd \mu_{S_n}(v) \\
& \leqslant \int\P(x\in {\mathcal O}_{n,n})\biggl(\int_{\{|u+v|<\|x-y\|\}}\hspace{-1cm}\|x-y\|^{-s}g_{Z_n}(u)\dd u \dd \mu_{S_n}(v)
+ \int_{\{|u+v|>\|x-y\|\}}\hspace{-1cm}|u+v|^{-s}g_{Z_n}(u)\dd u \dd \mu_{S_n}(v)\biggl) \\
& \leqslant 2 \|x-y\|\,\sup_{t\in\R}(g_{Z_n}(t)) \|x-y\|^{-s}+ \sup_{t\in\R}(g_{Z_n}(t)) \int_{\{|u+v|>\|x-y\|\}}\hspace{-1cm}|u+v|^{-s}\dd u \dd \mu_{S_n}(v) \\
& \leqslant 2 C\|x-y\|\,\|x-y\|^{-1}\lambda^{-\frac{n(\beta-\alpha)}{D}}\,\|x-y\|^{-s}+ C\|x-y\|^{-1}\lambda^{-\frac{n(\beta-\alpha)}{D}}\,\|x-y\|^{-s+1} \\
& =  C \|x-y\|^{-s}\lambda^{-\frac{n(\beta-\alpha)}{D}}.
\end{align*}

\noindent Finally, the assumption on $\|x-y\|$ implies that $\lambda^{-n}\leqslant (\|x-y\|\lambda^{\frac{H}{D}})^{\frac{D}{H}}$, which provides the desired bound.
\end{proof}

\section{Proof of the main theorem}\label{sec:proofmain}

\noindent Let us recall that for any non-empty compact set $K\subset\R^{D+1}$ one has (see \cite{fal03})
\begin{equation}\label{eq:ineqdim}
0\leqslant \dimhaus(K)\leqslant\dimbox(K)\leqslant D+1.
\end{equation}
Thus the proof of Theorem \ref{theo:maintheo} will consist in proving that $D+1-\frac{\alpha}{\beta}$ is an upper bound for $\dimbox(\Gamma)$ and a lower bound for $\dimhaus(\Gamma)$.

\subsection{An upper bound for the box-dimension of $\Gamma$}\label{sec:boxdim}

\pass

\noindent First we investigate the box-dimension of $\Gamma$. For every $\tau>0$ we cover $[0,1]^D\times \R$ with $\tau$-mesh cubes and denote by ${\mathcal N}(\tau)$ the (finite) number of cubes from this partition which intersect $\Gamma$. Then, we can express the box-dimension of $\Gamma$ in terms of ${\mathcal N}(\tau_N)$ (see Section 3.1 in \cite{fal03} and Section 2.2 in \cite{tri95}):
\begin{equation}\label{eq:boxdimbox}
\dimbox(\Gamma)=\underset{N\to \infty}{\limsup}\,\frac{\log {\mathcal N}(\tau_N)}{|\log(\tau_N)|}.
\end{equation}

\pass
\begin{lem}\label{lem:dimconst}
There exists a constant $d>0$ such that $\,\,\P(\dimbox(\Gamma)= d)=1$.
\end{lem}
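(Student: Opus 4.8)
The plan is to prove that $\dimbox(\Gamma)$ is an almost surely constant random variable by invoking a zero-one law. The natural tool here is the Kolmogorov zero-one law (or, equivalently, the Hewitt-Savage zero-one law), applied to the independent sequence of Poisson point processes $({\mathcal X}_n)_{n\geqslant 0}$ that generate $F$. First I would observe, from the representation formula \eqref{eq:boxdimbox}, that
\begin{equation*}
\dimbox(\Gamma)=\limsup_{N\to\infty}\frac{\log {\mathcal N}(\tau_N)}{|\log(\tau_N)|},
\end{equation*}
so $\dimbox(\Gamma)$ is measurable with respect to $\sigma({\mathcal X}_0,{\mathcal X}_1,\ldots)$. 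The key point is that this random variable does not depend on any finite collection of the $({\mathcal X}_n)$, hence it is measurable with respect to the tail $\sigma$-field $\bigcap_{m\geqslant 0}\sigma({\mathcal X}_m,{\mathcal X}_{m+1},\ldots)$; since the ${\mathcal X}_n$ are independent, Kolmogorov's zero-one law forces $\dimbox(\Gamma)$ to be almost surely equal to some constant $d\in[0,D+1]$.

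To justify the tail-measurability, I would argue as follows. Fix $M\geqslant 0$ and write $F=F^{(<M)}+F^{(\geqslant M)}$ where $F^{(<M)}=\sum_{n=0}^{M-1}\lambda^{-n\alpha/D}\Delta_n$ and $F^{(\geqslant M)}=\sum_{n\geqslant M}\lambda^{-n\alpha/D}\Delta_n$. The finite sum $F^{(<M)}$ is a piecewise-linear, hence Lipschitz, function on $[0,1]^D$ (each $\Delta_n$ is piecewise linear with a locally finite set of pieces on the compact $[0,1]^D$, so it is Lipschitz there, with some finite random constant). Adding a Lipschitz function to $F^{(\geqslant M)}$ does not change the box-dimension of the graph: indeed if $G$ is Lipschitz then the map $(x,t)\mapsto(x,t+G(x))$ is bi-Lipschitz on $\R^{D+1}$, and box-dimension is invariant under bi-Lipschitz maps (see \cite{fal03}). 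Therefore $\dimbox(\Gamma)=\dimbox(\Gamma^{(\geqslant M)})$ where $\Gamma^{(\geqslant M)}$ is the graph of $F^{(\geqslant M)}$, and the latter is measurable with respect to $\sigma({\mathcal X}_M,{\mathcal X}_{M+1},\ldots)$. Since $M$ is arbitrary, $\dimbox(\Gamma)$ is tail-measurable.

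The main obstacle is the second, seemingly innocuous, claim: one must be sure that $F^{(\geqslant M)}$ is itself a well-defined continuous (in fact bounded) function on $[0,1]^D$, so that $\Gamma^{(\geqslant M)}$ makes sense and the decomposition $F=F^{(<M)}+F^{(\geqslant M)}$ is legitimate. This is immediate from $0\leqslant\Delta_n\leqslant 1$ and $\lambda>1$, $\alpha>0$, which give normal convergence of the series $\sum_{n\geqslant M}\lambda^{-n\alpha/D}\Delta_n$; continuity of each $\Delta_n$ then yields continuity of the sum. The only genuine care needed is the Lipschitz property of the finite partial sum $F^{(<M)}$ on $[0,1]^D$: since the simplex tessellations ${\mathcal S}_0,\ldots,{\mathcal S}_{M-1}$ restricted to the compact cube $[0,1]^D$ have almost surely finitely many cells, each $\Delta_n$ is Lipschitz on $[0,1]^D$ with an almost surely finite (random) constant, and a finite sum of such functions is again Lipschitz almost surely. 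With these two observations in place, the bi-Lipschitz invariance of $\dimbox$ and Kolmogorov's zero-one law complete the proof; we simply call $d$ the almost sure value of $\dimbox(\Gamma)$.
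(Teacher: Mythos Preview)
Your proposal is correct and follows essentially the same route as the paper: split off the finite partial sum $F_m=\sum_{n\leqslant m}\lambda^{-n\alpha/D}\Delta_n$, observe that it is (almost surely) Lipschitz so that subtracting it does not change the box-dimension of the graph, conclude that $\dimbox(\Gamma)$ is tail-measurable with respect to the independent sequence $({\mathcal X}_n)_n$, and apply the $0$-$1$ law. The paper's proof is slightly terser (it simply cites \cite{tri95,fal03} for the invariance of $\dimbox$ under addition of a Lipschitz function), but the argument is identical in substance.
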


\begin{proof}
For every $m\in\N$, let us denote by $\cal{A}_m$ the $\sigma$-algebra generated by the point process $\cal{X}_m$ and $F_m(x)=\sum_{n=0}^m \lambda^{-\frac{n\alpha}{D}}\Delta_n(x)$, $x\in [0,1]^D$. Since $F_m$ is a Lipschitz function, the graph of the function $F-F_m$ has the same box-dimension as the graph of $F$ (see Section 12.4 in \cite{tri95} or Chapter 11 in \cite{fal03}). Consequently, we can use \eqref{eq:boxdimbox} to show that $\dimbox(\Gamma)$ is a random variable which is measurable with respect to $\sigma(\cal{A}_m:m\geqslant n)$ for every $n\in\N$. We then use the $0$-$1$ law to deduce that it is almost surely constant.
\end{proof}

\noindent\textbf{Proof of the upper bound of \eqref{eq:maintheo}.}

\noindent Let $(\tau_{N_k})_k$ be a subsequence of $(\tau_N)_N$ such that
\begin{equation*}
\dimbox(\Gamma)=\lim_{k\to\infty}\frac{\log\cal{N}(\tau_{N_k})}{|\log(\tau_{N_k})|}\leqslant D+1.
\end{equation*}
Applying Lebesgue's convergence theorem we obtain
\begin{equation*}
\E\bigg(\lim_{k\to\infty}\frac{\log \cal{N}(\tau_{N_k})}{|\log(\tau_{N_k})|}\bigg)
=\lim_{k\to\infty}\E\bigg(\frac{\log \cal{N}(\tau_{N_k})}{|\log(\tau_{N_k})|}\bigg)
\leqslant\limsup_{N\to\infty}\E\bigg(\frac{\log \cal{N}(\tau_{N})}{|\log(\tau_{N})|}\bigg).
\end{equation*}
Then, Jensen's inequality with the concave function $\log$ yields
\begin{align*}
\E(\dimbox(\Gamma))
\leqslant \limsup_{N\to\infty}\E\bigg(\frac{\log \cal{N}(\tau_{N})}{|\log(\tau_{N})|}\bigg)
\leqslant \limsup_{N\to\infty}\log\E\biggl(\frac{{\mathcal N}(\tau_N)}{|\log(\tau_N)|}\biggl).
\end{align*}
We now use Proposition $11.1$ from \cite{fal03} and Proposition \ref{prop:estimoscill} to get
\begin{equation*}
\E({\mathcal N}(\tau_N))\leqslant 2\lceil \tau_N^{-1}\rceil^D + \tau_N^{-1} \sum_{\substack{\mathrm{k}=(k_1,\ldots,k_D)\\ 0\leqslant k_i \leqslant\lceil \tau_N^{-1}\rceil}}\E(\osc_{\tau_N}(\mathrm{k}\tau_N)) = \mathrm{O}\bigl(\tau_N^{p-D-1}\bigl).
\end{equation*}
In conclusion, for every $H>\beta$ and $p<\frac{\alpha}{H}$, we obtain $\E(\dimbox(\Gamma))\leqslant D+1-p$. We finally get
\begin{equation*}
\dimbox(\Gamma)=\E(\dimbox(\Gamma))\leqslant D+1-\dfrac{\alpha}{\beta}
\end{equation*}
by letting $H\to\beta$, $p\to \frac{\alpha}{\beta}$, and by using Lemma \ref{lem:dimconst}. \hfill $\square$

\subsection{A lower bound for the Hausdorff dimension of $\Gamma$}\label{sec:hausdim}

\pass

\noindent To find a lower bound for the Hausdorff dimension of a compact set is generally a difficult problem. An important step was made in \cite{hunt98} when Hunt proposed a way to find a lower bound for the Hausdorff dimension of the graph of Weierstrass functions using the `finite energy criterion'. The arguments of \cite{hunt98} can be applied for a large class of Weierstrass-type functions, but not for the Takagi-Knopp series defined by \eqref{def:taka1D} because the sawtooth function $\Delta$ is not regular enough. This well-known criterion is used for calculating the Hausdorff dimension of more general fractal sets (see e.g. Chapter 4 in \cite{mope}).

\noindent Let us recall that the Hausdorff dimension of a non-empty compact set $K\subset\R^{D+1}$ may be expressed in terms of finite energy of some measures thanks to a lemma due to Frostman (see e.g. \cite{kah85,mat95}):
\begin{equation*}\label{eq:hausenergy}
\dimhaus(K)= \sup_{\mu}\,\bigl(\sup\{s\geqslant 0 : I_s(\mu)<\infty\}\bigl)
\end{equation*}
where the supremum on $\mu$ is taken over all the finite and non-null Borel measures such that $\mu(K)>0$, $I_s(\mu)$ being the $s$-energy of $\mu$ defined by
\begin{equation}\label{eq:muenergy}
I_s(\mu)= \iint \|x-y\|^{-s} \dd\mu(x)\dd\mu(y).
\end{equation}
Therefore, if such a measure $\mu$ satisfies $I_s(\mu)<\infty$ then $\dimhaus(K)\geqslant s$.

\pass Let us recall now a classical way to construct such a measure on $\Gamma$. For each $N\geqslant 0$ the set $W_N$ (see \eqref{eq:WN}) is a Borel subset of $[0,1]^D$. Since $F$ is a continuous function then $\Gamma$ is a Borel set too so that we can consider the measure $\mu_{W_N}$ obtained by lifting onto $\Gamma$ the $D$-dimensional Lebesgue measure restricted to $W_N$. Precisely, for all Borel set $E\subset\R^{D+1}=\R^D\times\R$,
\begin{equation*}\label{eq:defmu}
\mu_{W_N}(E)= \vol\bigl\{x\in[0,1]^D\cap W_N \text{ such that }(x,F(x))\in E \bigl\}
\end{equation*}
and $\mu_{W_N}$ is a positive measure as soon as $\vol(W_N)>0$. The $s$-energy of $\mu_{W_n}$ is then
\begin{equation}\label{eq:muenergybis}
I_s(\mu_{W_N})= \iint_{W_N\times W_N} \bigl(\|x-y\|^2+|F(x)-F(y)|\big)^2)^{-\frac{s}{2}}\dd x\dd y.
\end{equation}
Notice that the finiteness of $I_s(\mu_{W_N})$ depends only on the size of the increments $F(x)-F(y)$ when $\|x-y\|$ is small.

\pass\textbf{Proof of the lower bound of \eqref{eq:maintheo}.}

\noindent Let us consider, for all $n\geqslant0$, the set
$$ T_n = \left\{(x,y)\in [0,1]^D \times [0,1]^D : \tau_{n+1} <\|x-y\|\leqslant \tau_n \right\}.$$

\noindent Let $N\geqslant 1$ and $s>1$. Since $W_N\subset {\mathcal O}_{n,n}$ for all $n\geqslant N$, we have, with \eqref{eq:muenergybis},
\begin{align*}
I_s(\mu_{W_N}) & \leqslant \iint_{(x,y)\in W_N\times [0,1]^D} \bigl(\|x-y\|^2+|F(x)-F(y)|\big)^2)^{-\frac{s}{2}}\,\dd x\dd y \\
& \leqslant C'+C_N +  \sum_{n=N}^{\infty} \iint_{(x,y)\in T_n \cap (W_N\times [0,1]^D)} \bigl(\|x-y\|^2+|F(x)-F(y)|\big)^2)^{-\frac{s}{2}}\,\dd x\dd y \\
& \leqslant C'+C_N +  \sum_{n=N}^{\infty} \iint_{(x,y)\in T_n} \bigl(\|x-y\|^2+|F(x)-F(y)|\big)^2)^{-\frac{s}{2}}\indic_{{\mathcal O}_{n,n}}(x)\,\dd x\dd y
\end{align*}
where
\begin{align*}
C' & = \iint_{\{(x,y)\in W_N\times [0,1]^D : \|x-y\|\geqslant 1\}} \bigl(\|x-y\|^2+|F(x)-F(y)|\big)^2)^{-\frac{s}{2}}\,\dd x\dd y \\
& \leqslant \iint_{\{(x,y)\in [0,1]^D\times [0,1]^D : \|x-y\|\geqslant 1\}}\|x-y\|^{-s} \dd x\dd y \leqslant 1,
\end{align*}
and
\begin{align*}
C_N & = \iint_{(x,y)\in (T_0\cup \cdots \cup T_{N-1}) \cap (W_N\times [0,1]^D)} \bigl(\|x-y\|^2+|F(x)-F(y)|\big)^2)^{-\frac{s}{2}}\,\dd x\dd y \\
& \leqslant \iint_{(x,y)\in (T_0\cup \cdots \cup T_{N-1})} \|x-y\|^{-s} \dd x\dd y \leqslant \lambda^{NHs}.
\end{align*}

\noindent To show that the integral \eqref{eq:muenergybis} is finite almost surely it is enough to show that its expectation is finite. By Fubini's theorem,
\begin{equation*}\label{eq:majintenergy}
\E(I_s(\mu_{W_N})) \leqslant 1+\lambda^{NHs}
+ \sum_{n=N}^{\infty}\iint_{(x,y)\in T_n} \E\bigl(\bigl(\|x-y\|^2+|F(x)-F(y)|\big)^2)^{-\frac{s}{2}}\indic_{{\mathcal O}_{n,n}}(x)\bigl)\dd x\dd y.
\end{equation*}

\noindent Using the estimate \eqref{eq:esperincZ} we obtain
\begin{align*}\label{eq:majintenergybis}
\E(I_s(\mu_{W_N})) \leqslant 1+\lambda^{NHs} + C\iint_{(x,y)\in \cup_{n=N}^{\infty} T_n} \|x-y\|^{-s+\frac{\beta-\alpha}{H}}\dd x\dd y.
\end{align*}

\noindent This latter integral converges as soon as $-s+\frac{\beta-\alpha}{H}>-D$. Therefore the random measure $\mu_{W_N}$ has a finite $s$-energy
for all $1<s<D+\frac{\beta-\alpha}{H}$. Since $N$ may be chosen such that the probability $\P(\vol(W_N)>0)$ will be arbitrarily close to $1$ (see Proposition \ref{prop:size}) we deduce that
\begin{equation*}
\dimhaus(\Gamma)\geqslant D+\frac{\beta-\alpha}{H}
\end{equation*}
for all $H>\beta$ and almost surely. We obtain the desired lower bound by letting $H\in\Q_+$ go to $\beta$. \hfill $\square$

\section{Related models}

\noindent In this section, we study the fractal properties of two different models which are related to our Poisson-Voronoi construction: a deterministic series of pyramidal functions with hexagonal bases on the one hand, a random perturbation of the classical Takagi-Knopp series on a dyadic mesh on the other hand.

\subsection{Takagi-like series directed by hexagonal Voronoi tessellations}\label{sec:takahexa}

\pass

\noindent The series that we study here is only defined in $\R^2$. It is very close to the original function $F_{\lambda,\alpha,\beta}$. The novelty lies in the $\Delta_n$ functions: we consider now pyramids with a regular hexagonal basis. This model is naturally related to the previous one for two reasons. First, an hexagonal mesh is known to be the Voronoi tessellation generated by a regular triangular mesh. Secondly, the mean of the number of vertices of a typical cell from a Poisson-Voronoi tessellation is known to be $6$ (see e.g. Prop. 3.3.1. in \cite{mol94}) so that an hexagonal mesh may be seen as an {\it idealized} realization of a Poisson-Voronoi tessellation.

\pass We start with the deterministic Voronoi tessellation whose cells are identical regular hexagons such that one is centered at the origin $(0,0)$ and has a vertex at $(1,0)$. Then, considering all the centers of these hexagons as a set of points $\cal{X}_0$, we set $\cal{X}_n = 2^{-n}\cal{X}_0$ and construct the hexagonal Voronoi tessellation associated with. Here again $\Delta_n:\R^2 \longrightarrow [0,1]$ is the piecewise linear pyramidal function satisfying $\Delta_n=0$ on $\bigcup_{c\in\cal{X}_n}\partial \cal{C}_c$ and $\Delta_n=1$ on $\cal{X}_n$.

\noindent Let us notice that, for all $x\in\R^2$ and all $n\geqslant 0$, we have $\Delta_n(x)=\Delta(2^n x)$. We fix $\alpha\in(0,1]$ and define a function
\begin{equation}\label{def:takahexa}
f_{\alpha}(x) = \sum_{n=0}^{\infty} 2^{-n\alpha} \Delta_n(x) = \sum_{n=0}^{\infty} 2^{-n\alpha} \Delta(2^n x)\,\,,\,x\in\R^2.
\end{equation}
The main theorem of this section is the analogue of Theorem \ref{theo:maintheo}. Actually we state a more precise result than Proposition \ref{prop:estimoscill} for the oscillations of $f_{\alpha}$ but we cannot determine the Hausdorff dimension of $\Gamma_{\alpha}$.

\begin{theo}\label{theo:maintheotakahexa}
Let $0<\alpha\leqslant 1$. Then $f_{\alpha}$ is a continuous function such that
\begin{equation}\label{eqtheo:takaosc}
\exists\,C,C'>0,\quad\forall\,\tau\in(0,1),\quad\forall\,x\in[0,1]^2,\quad C'\tau^{\alpha}\leqslant \osc_{\tau}(x) \leqslant C\tau^{\alpha}.
\end{equation}
Moreover, its graph
\begin{equation*}
\Gamma_{\alpha}=\left\{(x,f_{\alpha}(x)):x\in[0,1]^2\right\}\subset \R^2\times \R
\end{equation*}
is a fractal set satisfying
\begin{equation}\label{eq:maintheotakahexa}
\dimbox(\Gamma_{\alpha})=3-\alpha.
\end{equation}
\end{theo}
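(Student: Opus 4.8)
The plan is to establish, in turn: (a) continuity of $f_\alpha$; (b) the two-sided oscillation estimate \eqref{eqtheo:takaosc}; and (c) the value \eqref{eq:maintheotakahexa} of $\dimbox(\Gamma_\alpha)$, the last by feeding (b) into the standard box-counting inequality (Proposition~$11.1$ in \cite{fal03}) exactly as in the proof of the upper bound of \eqref{eq:maintheo}. Part (a) is immediate: each $\Delta_n$ is continuous with values in $[0,1]$, so $\sum_n 2^{-n\alpha}\Delta_n$ converges uniformly on $\R^2$ to the continuous function $f_\alpha$.

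For the upper bound in (b) I would argue as in Proposition~\ref{prop:estimoscill}, but deterministically and more sharply. Writing $\Delta_n(\cdot)=\Delta(2^n\cdot)$, where $\Delta$ is the fixed hexagonal pyramidal function, which is Lipschitz with a constant $\Lambda$ depending only on the regular hexagon (the largest slope of its affine faces), the function $\Delta_n$ is $2^n\Lambda$-Lipschitz, whence $\osc(\Delta_n,x+[0,\tau]^2)\leqslant\min(1,\sqrt2\,\Lambda\,2^n\tau)$. Summing gives $\osc_\tau(x)\leqslant\sum_{n\geqslant0}2^{-n\alpha}\min(1,\sqrt2\,\Lambda\,2^n\tau)$, and splitting this series at the index $n\asymp\log_2(1/\tau)$ yields $\osc_\tau(x)=\mathrm{O}(\tau^\alpha)$ for $0<\alpha<1$ (at the endpoint $\alpha=1$ the same computation produces $\mathrm{O}(\tau|\log\tau|)$, a logarithmic factor which will be harmless for the dimension).

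The lower bound in (b) is the hard part. I would use two structural features of the hexagonal meshes: first, the centre lattice $\mathcal{X}_0$ is stable under doubling, so $\mathcal{X}_0\subset\mathcal{X}_1\subset\cdots$, hence $\Delta_n(c)=\Delta(2^nc)=1$ for every $n\geqslant N$ whenever $c\in\mathcal{X}_N$, and likewise (the vertex set being doubling-stable) $\Delta_n(v)=0$ for $n\geqslant N$ at a vertex $v$ of $\mathcal{T}_N$; second, $\Delta$ is concave on each cell, being the minimum of the six affine functions vanishing on its edges. Given $\tau\in(0,1)$, pick $N$ with $2^{-N}\asymp\tau$ (a small fixed constant ratio) so that $x+[0,\tau]^2$ contains many cells of $\mathcal{T}_N$, and select one cell $H$, with centre $c$, after (i) pinning $c$ to a coarser refinement $\mathcal{X}_{N-K_0}$ (fixed $K_0$), which forces $2^nc\in\mathcal{X}_0$ for the $K_0$ coarsest scales $n\geqslant N-K_0$ and so makes their contributions non-negative outright, and (ii) a union bound over the remaining scales ensuring each $2^nc$ ($n<N-K_0$) stays, modulo $\mathcal{X}_0$, at distance $\gtrsim2^{-(N-n)}$ from the skeleton of $\mathcal{T}_0$; this is feasible because the defective fraction at scale $n$ is $\mathrm{O}(2^{-(N-n)})$ and $\sum_{n<N-K_0}2^{-(N-n)}$ is small. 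Let $v_1,\dots,v_6$ be the vertices of $H$, so $v_i=c+2^{-N}\nu_i$ with $\sum_i\nu_i=0$ and, by the doubling-stability above, $\Delta_n(v_i)=0\leqslant1=\Delta_n(c)$ for all $n\geqslant N$. Then
\[
\sum_{i=1}^{6}\bigl(f_\alpha(c)-f_\alpha(v_i)\bigr)=\frac{6\cdot2^{-N\alpha}}{1-2^{-\alpha}}+\sum_{n=0}^{N-1}2^{-n\alpha}\Bigl(6\Delta_n(c)-\sum_{i=1}^{6}\Delta_n(v_i)\Bigr),
\]
and every inner bracket is $\geqslant0$: the six displacements $2^n(v_i-c)=2^{-(N-n)}\nu_i$ sum to zero, while by the choice of $H$ all the points $2^nv_i$ lie in the same cell of $\mathcal{T}_0$ as $2^nc$, on which $\Delta$ is concave. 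Hence the average of $f_\alpha(c)-f_\alpha(v_i)$ over $i$ is at least $2^{-N\alpha}$, so $\osc(f_\alpha,H)\geqslant2^{-N\alpha}\gtrsim\tau^\alpha$ and therefore $\osc_\tau(x)\geqslant C'\tau^\alpha$ for every $x$.

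Finally, for (c): cover $[0,1]^2\times\R$ by $\tau$-cubes and let $\mathcal{N}(\tau)$ count those meeting $\Gamma_\alpha$. Proposition~$11.1$ of \cite{fal03} bounds $\mathcal{N}(\tau)$ between $\tau^{-1}\sum_Q\osc(f_\alpha,Q)$ and $C\tau^{-2}+\tau^{-1}\sum_Q\osc(f_\alpha,Q)$, the sum over the $\asymp\tau^{-2}$ grid subcubes $Q$ of $[0,1]^2$; the upper bound in \eqref{eqtheo:takaosc} then gives $\mathcal{N}(\tau)=\mathrm{O}(\tau^{\alpha-3})$, hence $\dimbox(\Gamma_\alpha)\leqslant3-\alpha$, and the lower bound gives $\mathcal{N}(\tau)\geqslant c\,\tau^{\alpha-3}$, hence $\dimbox(\Gamma_\alpha)\geqslant3-\alpha$ (at $\alpha=1$ the extra $|\log\tau|$ is irrelevant and $\dimbox(\Gamma_1)=2$, the bound $\dimbox\geqslant2$ being automatic for a continuous graph on $[0,1]^2$). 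The genuine obstacle is the lower oscillation bound — showing that the low-frequency corrections $6\Delta_n(c)-\sum_i\Delta_n(v_i)$ cannot conspire against the single large contribution of the frequencies $n\geqslant N$, which is exactly where the regularity of the hexagonal tessellation (the six directions summing to zero and the concavity of its pyramidal function on each cell) is indispensable, in contrast with a general Poisson--Voronoi mesh, for which this argument breaks down and only the Hausdorff-dimension route via the Frostman criterion survives.
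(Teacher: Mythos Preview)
Your argument for continuity and for the upper bound in \eqref{eqtheo:takaosc} is correct and essentially identical to the paper's (the paper writes the geometric sum assuming $\alpha<1$ and does not comment on the logarithm at $\alpha=1$; your remark about the harmless $|\log\tau|$ factor is apt). The deduction of \eqref{eq:maintheotakahexa} from \eqref{eqtheo:takaosc} via Proposition~11.1 of \cite{fal03} is also the same.

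The gap is in your lower oscillation bound. Your selection procedure for the cell $H$ cannot succeed as written. Once you pin $c\in\mathcal{X}_{N-K_0}$, the point $2^{N-K_0-1}c$ lies in $\mathcal{X}_1=\tfrac12\mathcal{X}_0$, and an elementary computation shows that the three non-trivial cosets of $\mathcal{X}_0$ in $\mathcal{X}_1$ are exactly the midpoints of the hexagon edges of $\mathcal{T}_0$. Hence three quarters of the admissible centres $c$ lie \emph{exactly} on $\partial\mathcal{T}_{N-K_0-1}$, so the ``defective fraction'' at scale $n=N-K_0-1$ is $3/4$, not $\mathrm{O}(2^{-(K_0+1)})$. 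Iterating, the only centres $c\in\mathcal{X}_N$ that stay off $\partial\mathcal{T}_n$ for every $n<N$ are those already in $\mathcal{X}_0$, and for small $\tau$ there need not be any such point in $x+[0,\tau]^2$. Without the selection, your concavity inequality $6\Delta(2^nc)\geqslant\sum_i\Delta(2^nv_i)$ genuinely fails whenever $2^nc\in\partial\mathcal{T}_0$: then $\Delta(2^nc)=0$, two of the $2^nv_i$ lie on the same edge (so $\Delta=0$ there too), but the remaining four sit strictly inside the two adjacent cells with $\Delta$ of order $2^{-(N-n)}$, giving a negative bracket. Summed with weights $2^{-n\alpha}$ this produces a deficit of the \emph{same} order $2^{-N\alpha}$ as the gain from the tail $n\geqslant N$, so the argument does not close.

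The paper's remedy is different: it does not average over all six vertices and does not try to select a generic cell. Instead it analyses, for the \emph{given} centre $c$, the sequence of positions of $c$ in the coarser skeletons $\mathrm{Sk}_n$ (centre~$C$, skeleton edge~$E$, open face~$F$), shows this sequence is necessarily of the form $C\cdots C\,E\cdots E\,F\cdots F$ as $n$ decreases, and then picks a \emph{single} pair $c_i,c_j$ of diametrically opposite vertices. The point is that every skeleton edge of $\mathcal{T}_0$ is parallel to one of the three long diagonals of the small hexagon $\mathcal{C}_c^N$; choosing the pair aligned with that direction puts $2^nc_i,2^nc_j$ on the same edge (where $\Delta=0$) or on the same affine piece of $\Delta$ (where, since $c$ is the midpoint of $[c_i,c_j]$, the two increments cancel exactly). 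Thus each coarse-scale bracket $Z_n(c,c_i)+Z_n(c,c_j)$ is $\geqslant 0$ without any genericity assumption. This direction-matching plus exact cancellation, rather than concavity plus a union bound, is what makes the low-frequency corrections harmless here.
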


\begin{proof} In the sequel we drop again the index $\alpha$ so that $f=f_{\alpha}$ and $\Gamma=\Gamma_{\alpha}$. We also keep the notation $Z_n(x,y) = 2^{-n\alpha}(\Delta_n(x)-\Delta_n(y)) = 2^{-n\alpha}(\Delta(2^n x)-\Delta(2^n y))$ for all $x,y\in\R^2$.

\vspace{0.15cm}
\noindent (i) Let us state the upper estimates first. We fix $x,y\in[0,1]^2$ such that $\|x-y\|\in(0,1)$ and consider $N\geqslant 1$ such that $2^{-N}<\|x-y\|\leqslant 2^{-(N-1)}$. Using the fact that $\Delta$ is Lipschitz, with Lipschitz constant $1$, and bounded by $1$, we have $|Z_n(x,y)|\leqslant 2^{-n\alpha}\min(2^n\|x-y\|,1)$. Therefore
\begin{align*}
|f(x)- f(y)| \leqslant \sum_{n=0}^{\infty} |Z_n(x,y)| & \leqslant \sum_{n=0}^{N-1} (2^{n(1-\alpha)}\|x-y\|) + \sum_{n=N}^{\infty} 2^{-n\alpha} \\
& \leqslant \|x-y\|\frac{2^{N(1-\alpha)}}{2^{1-\alpha}-1} + \frac{2^{-N\alpha}}{1-2^{-\alpha}} \\
& \leqslant \|x-y\|\frac{(2\|x-y\|^{-1})^{(1-\alpha)}}{2^{1-\alpha}-1} + \frac{\|x-y\|^{\alpha}}{1-2^{-\alpha}} \\
& \leqslant C\|x-y\|^{\alpha}
\end{align*}
where $C$ is a positive constant which only depends on $\alpha$. Then, $|f(x)-f(y)| \leqslant C \tau^{\alpha}$ for all $x,y\in\R^2$ such that $0<\|x-y\|<\sqrt2\tau<1$. This gives the upper bound in \eqref{eqtheo:takaosc}.

\pass (ii) Now we state the lower estimates. We begin with finding a lower bound for the oscillation over an hexagonal cell. The key-point for estimating this oscillation is to calculate two particular increments for two pairs of well-chosen points belonging to the cell, namely the center and vertices. To keep in mind the number of the generation, we denote by $\cal{C}_c^n$, $c\in\cal{X}_n$ (resp. ${\mathcal T}_n$, ${\mathcal S}_n$, $\sk_n$) the cells of the Voronoi tessellation of generation $n$ (resp. the set $\{\cal{C}_c^n:c\in\cal{X}_n\}$, the associated simplex tessellation and the skeleton of the simplex tessellation). Moreover, the six vertices of a cell $\cal{C}_c^n$ are denoted by $c_i$ with $i\in\{1,\ldots,6\}$.

\noindent Then, let $N\geqslant 1$, $c\in\cal{X}_N$, $c_1,\ldots,c_6$ the vertices of $\cal{C}_c^N$ and $i\in\{1,\ldots,6\}$. We can write
\begin{equation*}
f(c)-f(c_i) = \sum_{n=0}^{N-1} Z_n(c,c_i) + \sum_{n=N}^{\infty} Z_n(c,c_i).
\end{equation*}

\noindent As soon as a point is the center (resp. a vertex) of a cell $\cal{C}_c^N$ then it is the center (resp. a vertex) of all the cells of higher generations. Hence $\Delta(2^n c)=1$ and $\Delta(2^n c_i)=0$ for all $n\geqslant N$. Therefore
\begin{equation*}
\sum_{n=N}^{\infty} Z_n(c,c_i) =\sum_{n=N}^{\infty} 2^{-n\alpha} = \frac1{1-2^{-\alpha}}\,2^{-N\alpha}.
\end{equation*}

\noindent Let $i,j\in\{1,\ldots,6\}$. We obtain
\begin{align}\label{twosums}
\sup_{y,y'\in\cal{C}_c^N} |f(y')-f(y)|
& \geqslant \frac1{2}\,\bigl((f(c)-f(c_i))+(f(c)-f(c_j))\bigl)\nonumber\\
& \geqslant \frac1{2}\,\biggl(\,\sum_{n=0}^{N-1}Z_n(c,c_i)+\sum_{n=0}^{N-1}Z_n(c,c_j)\biggl) + \frac{1}{1-2^{-\alpha}}\,2^{-N\alpha}.
\end{align}

\noindent For a fixed hexagon $\cal{C}_c^N$, we claim that there exists a pair of two diametrically opposed vertices $c_i$ and $c_j$ such that the sum of the two first sums above is positive. Indeed, for $n<N$, the center $c$ is included in one or two cells $\cal{C}_c'^n$ of the tessellation of generation $n$ and it can be only in three positions: at the center $c'$ of a cell from $\cal{T}_n$, on one `edge' of the skeleton $\sk_n$ (i.e. a segment between two consecutive vertices $c'_i$ and $c'_{i+1}$ or between the center $c'$ and one of its vertex $c'_i$), or on a `face' (i.e. an open triangle of vertices $c'$, $c'_i$ and $c'_{i+1}$ or equivalently, a connected component of the complementary set of $\sk_n$). Let us denote by $C$, $E$ and $F$ respectively these three positions of $c$. We are interested in the behaviour of the sequence of the positions of $c$ when $n$ goes from $N$ to $0$. In particular, we notice the two following facts.

\begin{itemize}
\item The set of centers of $\cal{T}_{n-1}$ is included in the set of centers of $\cal{T}_n$. Consequently, if $c$ is in position $E$ or $F$ at step $n$, it cannot be in position $C$ at step $(n-1)$.
\item The skeleton $\sk_{n-1}$ is included in the skeleton $\sk_n$. Consequently, if $c$ is in position $F$ at step $n$, it cannot be in position $E$ at step $(n-1)$.
\end{itemize}
Consequently, the sequence of positions when $n$ goes from $N$ to $0$ has to be $(C,\cdots,C)$ or $(C,\cdots,C,E,\cdots,E)$ or $(C,C,\cdots,C,E,\cdots,E,F,\cdots,F)$ (see the first generations on Figure \ref{fig:ECF} below).

\begin{figure}[!h]
    \centering
    \includegraphics[width=13cm]{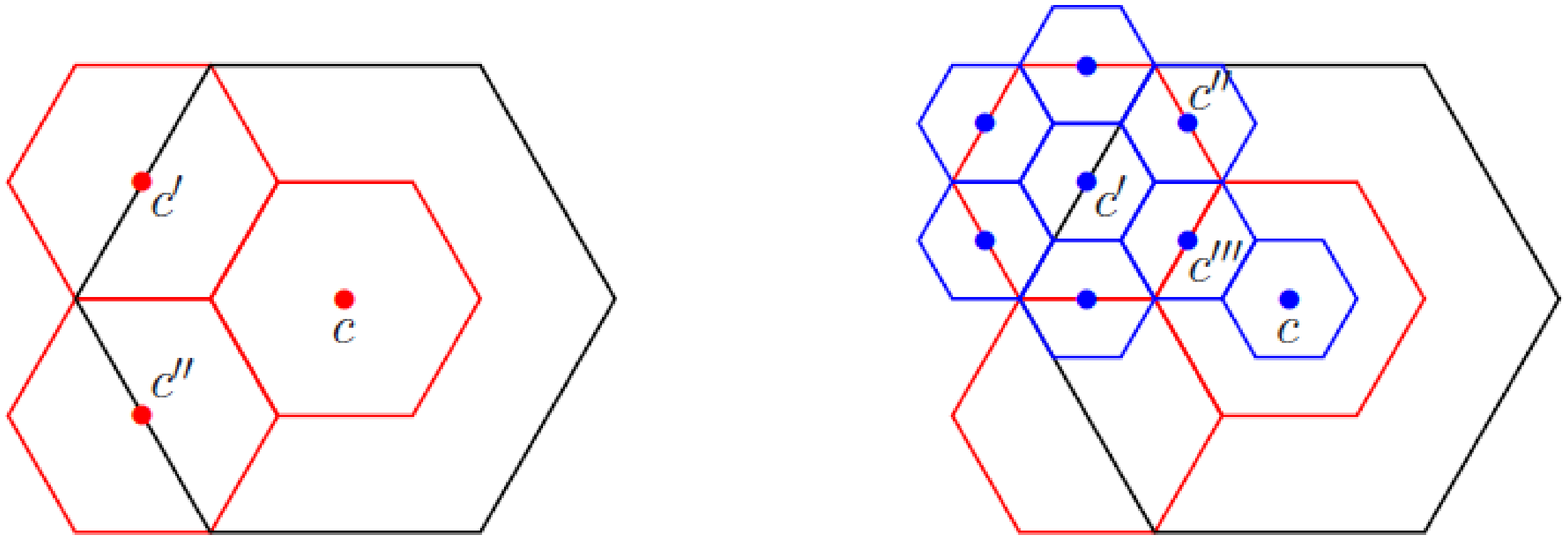}
    \caption{\label{fig:ECF} The only three possibilities for the sequence of the positions of the centers of the cells. On the left $p$ goes from $1$ to $0$: $c$ gives $(\textcolor{red}{C},C)$, $c'$ gives $(\textcolor{red}{C},E)$ and $c''$ gives $(\textcolor{red}{C},E)$. On the right $p$ goes from $2$ to $0$: $c$ gives $(\textcolor{blue}{C},\textcolor{red}{C},C)$, $c'$ gives $(\textcolor{blue}{C},\textcolor{red}{C},E)$, $c''$ gives $(\textcolor{blue}{C},\textcolor{red}{E},E)$ and $c'''$ gives $(\textcolor{blue}{C},\textcolor{red}{E},F)$.}
\end{figure}

\noindent Let us choose now the pair $(c_i,c_j)$ of vertices from $\cal{C}_c^N$ which satisfies that the sum of the first two sums in \eqref{twosums} is non-negative. In the sequel, we will use the following general facts.
\begin{itemize}
\item When $c$ is in position $E$ at step $n$, then there exist exactly two diametrically opposed vertices from $\cal{C}_c^N$ which are on the same edge of $\sk_n$ as $c$.
\item When $c$ is in position $F$ at step $n$, then the whole cell $\cal{C}_c^N$ is included in the same face as $c$.
\end{itemize}
\noindent{\it Case $(C,\cdots,C)$}. When the sequence of positions of $c$ is $(C,\cdots,C)$, we have $\Delta(2^nc)=1\geqslant \Delta(2^nc_i)$ for every $n<N$. Consequently, for any choice of $c_i$ and $c_j$, the sum of the two sums is non-negative.

\pass{\it Case $(C,\cdots,C,E,\cdots,E)$}. Let $n_0$ be the maximal $n<N$ such that $c$ is in position $E$ at step $n$. Let $c_i$ and $c_j$ be the two diametrically opposed vertices associated with $c$ which are on the same edge of the skeleton $\sk_{n_0}$. For every $n_0< n<N$, we have $Z_n(c,c_i)\geqslant 0$ and $Z_n(c,c_j)\geqslant 0$ (see the previous case). If $n\leqslant n_0$, either the edge of $\sk_n$ containing $c$, $c_i$ and $c_j$ is an edge from an hexagon of $\cal{T}_n$ or it is a segment between a center of an hexagon and one of its vertices. In the first case, we have $\Delta(2^nc)=\Delta(2^nc_i)=\Delta(2^nc_j)=0$. In the second case, since $c$ is the midpoint of $[c_i,c_j]$ and the slope of the pyramid is the same above the three points, we have $\Delta(2^nc)-\Delta(2^nc_i)=-(\Delta(2^nc)-\Delta(2^nc_j))$. Consequently, the sum of the two sums in \eqref{twosums} is non-negative.

\pass{\it Case $(C,C,\cdots,C,E,\cdots,E,F,\cdots,F)$}. We define $c_i$ and $c_j$ as in the previous case. The same reasoning as before can be applied for every $n<N$ such that $c$ is in position $C$ or $E$ at step $n$. Let $n<N$ be such that $c$ is in position $F$ at step $n$. Then the slope above the three points $c$, $c_i$ and $c_j$ is constant so the equality $\Delta(2^nc)-\Delta(2^nc_i)=-(\Delta(2^nc)-\Delta(2^nc_j))$ is still valid. Consequently, the sum of the two sums in \eqref{twosums} is non-negative.

\pass Therefore,
\begin{equation*}
\sup_{y,y'\in \cal{C}_c^N}|F(y')-F(y)| \geqslant C'2^{-N\alpha}
\end{equation*}
where $C'$ is a positive constant which only depends on $\alpha$.

\noindent Finally, let $\tau\in(0,1)$, $x\in[0,1]^2$ and $N\geqslant1$ such that $2^{-N}<\tau \leqslant 2^{-(N-1)}$. The ball $B_{\tau}(x)$ contains a cell $\cal{C}_c^{N+1}$ of generation $N+1$, thus
\begin{equation*}
\osc_{\tau}(x)\geqslant \sup_{y,y'\in \cal{C}_c^{N+1}}|F(y')-F(y)| \geqslant C'2^{-(N+1)\alpha}\geqslant C''\tau^{\alpha},
\end{equation*}
which gives the lower bound in \eqref{eqtheo:takaosc}.

\vspace{0.15cm}\noindent Combining (i) and Proposition $11.1$ in \cite{fal03} (see also \cite{dubtri}) we get $\cal{N}(\tau_N)\sim\tau_N^{\alpha-3}$ as $N\to\infty$. The result is then a consequence of \eqref{eq:boxdimbox}.
\end{proof}

\subsection{Takagi-Knopp series generated by a random perturbation of the dyadic mesh}

\pass

\noindent In this subsection, we stray from the Voronoi partition of $\R^D$. An alternative way of randomizing the underlying partition of a Takagi-Knopp series is the following: the sequence of dyadic meshes ${\mathcal D}_n=2^{-n}{\mathbb Z}^D$, $n\in\N$, is kept but each mesh ${\mathcal D}_n$ is translated by a random uniform vector in $2^{-n}(0,1)^D$. In each cube $C_{n,k}$, $k\in\N$, of the translate of ${\mathcal D}_n$, a random uniform `nucleus' $c_{n,k}$ is chosen independently. The associated random pyramidal function $\Delta_n$ is defined so that it is equal to zero on the translate of ${\mathcal D}_n$ and to $1$ on the set $\{c_{n,k},k\in\N\}$. The Takagi-Knopp type series $F_{\alpha,\beta}$, $\alpha,\beta>0$, is then given by a definition very similar to \eqref{def:voronoiseries} with $\lambda=2$. Compared to our Poisson-Voronoi construction, the main advantage of this model is that it preserves the cube structure so that it could be easier to deal with it in dimension two for applicational purpose in image analysis and pixel representation. The essential drawback is that the rigid structure of the mesh prevents us from obtaining an explicit Hausdorff dimension. Still, some results close to those proved in Section \ref{sec:prelim} can be deduced from similar methods. Indeed, if the random oscillation set ${\mathcal O}_{n,N}$, $n,N\in\N$, the increment $Z_n(x,y)$, $x,y\in\R^D$, the density $g_{Z_n}$ of $Z_n(x,y)$ conditionally on $\{x\in{\mathcal O}_{n,N}\}$ and the Lipschitz constant $L(x)$ are defined analogously, then the conclusions of Propositions \ref{prop:size} and \ref{prop:lipconst} are satisfied. Moreover, the point (ii) of Proposition \ref{prop:denshn} is replaced by the following estimate:
\begin{equation}\label{eq:estimzncarre}
\sup_{t\in\R}\,g_{Z_n}(t)\leqslant \frac{C}{\P(x\in {\mathcal O}_{n,n})}\,\lambda^{-\frac{n(\beta-\alpha)}{D}}\sum_{i=1}^D\frac1{|x_i-y_i|}.
\end{equation}
As a consequence, it is possible to derive an analogue of Proposition \ref{prop:estimoscill} and of the upper-bound of the box-dimension in \eqref{eq:maintheo}. This upper-bound is in particular the exact Hausdorff dimension when $D=1$. Indeed, in the linear case, the estimate of $\sup_{t\in\R}\,g_{Z_n}(t)$ in \eqref{eq:estimzncarre} coincides with (ii) of Proposition \ref{prop:denshn}, which implies that the lower-bound can be obtained along the same lines as in Section \ref{sec:proofmain}. We sum up our results in the next proposition, given without a detailed proof.
\begin{prop}\label{prop:laderniere}
Let $F_{\alpha,\beta}$ be the function as above with $0<\alpha\leqslant\beta\leqslant 1$. Then, its graph $\Gamma_{\alpha,\beta}$ satisfies almost surely the following estimates.
  \begin{enumerate}
  \item[(i)] For every $D\geqslant 2$, $\dimbox(\Gamma_{\alpha,\beta})\leqslant D+1-\frac{\alpha}{\beta}$.
  \item[(ii)] When $D=1$, $\dimbox(\Gamma_{\alpha,\beta})= \dimhaus(\Gamma_{\alpha,\beta})=2-\frac{\alpha}{\beta}$.
  \end{enumerate}
\end{prop}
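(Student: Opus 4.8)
The strategy is to transport the whole apparatus of Section~\ref{sec:prelim} to the perturbed dyadic model and then to repeat, essentially verbatim, the two dimension estimates of Section~\ref{sec:proofmain}. First I would set up the analogues of the auxiliary objects: the simplex tessellation ${\mathcal S}_n$, whose cells are the convex hulls $\mbox{Conv}(\{c_{n,k}\}\cup f)$ of a nucleus $c_{n,k}$ and a hyperface $f$ of the translated cube $C_{n,k}$; its skeleton $\sk_n$; the oscillation sets ${\mathcal O}_{n,N}$ and $W_N$; the increments $Z_n(x,y)$ and $S_n(x,y)$; and the local Lipschitz constant $L(x)$ of $\Delta_0$. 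Proposition~\ref{prop:size} then holds with the same proof: ${\mathcal S}_n$ is stationary thanks to the random translation of ${\mathcal D}_n$, and in distribution ${\mathcal S}_n$ is the image of ${\mathcal S}_{n-1}$ under the dilation of ratio $1/2$; these are the only structural facts used, and the Steiner formula applied to the hyperfaces of ${\mathcal S}_n$ meeting $[0,1]^D$ yields $\P(x\notin{\mathcal O}_{n,N})\leqslant C\lambda^{\frac{n\beta-NH}{D}}$, hence $\P(\vol(W_N)>0)\to1$. For Proposition~\ref{prop:lipconst} one uses that on its simplex $\Delta_0$ equals $z\mapsto\dist(z,\mathrm{aff}(f))/\dist(c_{0,k},\mathrm{aff}(f))$, $f$ being the active hyperface, so $L(x)$ is the reciprocal of the distance from a uniform nucleus of the cube of $x$ to one of the $2D$ coordinate hyperplanes; this reciprocal has infinite mean on its own, but the active hyperface is always the one lying beyond $x$, which keeps that distance bounded below by a coordinate of $x$ within its cube, so $\E(L(x))<\infty$, and conditioning on $\widetilde{\mathcal O}_{n,N}$ only flattens the pyramid, which gives the conditional bound.

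The heart of the matter is the analogue of Proposition~\ref{prop:denshn}(ii), namely \eqref{eq:estimzncarre}. On $\{x\in{\mathcal O}_{n,n}\}$ with $\|x-y\|\leqslant\tau_n$ the two points $x$ and $y$ lie in a common simplex of ${\mathcal S}_n$, on which $\Delta_n(z)=\dist(z,\mathrm{aff}(f))/d_n(x)$, where $f$ is the active hyperface, orthogonal to some coordinate axis $i$, and $d_n(x)=\dist(c_n(x),\mathrm{aff}(f))$; therefore
\[
Z_n(x,y)=\pm\,\lambda^{-\frac{n\alpha}{D}}\,\frac{x_i-y_i}{d_n(x)}.
\]
A computation parallel to that of Proposition~\ref{prop:denshn}(ii), with a Mecke-type formula for the independent uniform nuclei and the scaling invariance between generations to normalise the cube, shows that the conditional law of $d_n(x)$ given $\{x\in{\mathcal O}_{n,n}\}$ has a bounded density supported on an interval of length of order $\lambda^{-\frac{n\beta}{D}}$, with the usual $1/\P(x\in{\mathcal O}_{n,n})$ prefactor; the change of variable $d\mapsto\lambda^{-\frac{n\alpha}{D}}(x_i-y_i)/d$ then gives $\sup_t g_{Z_n}(t)\leqslant C\lambda^{-\frac{n(\beta-\alpha)}{D}}/(\P(x\in{\mathcal O}_{n,n})\,|x_i-y_i|)$, and summing over the $D$ possible active axes produces \eqref{eq:estimzncarre}. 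I expect this to be the main obstacle, not for any conceptual reason but because one must carefully describe the simplicial subdivision of a cube, translate the event $\{x\in{\mathcal O}_{n,n}\}$ into a condition on that subdivision, and identify the conditional law of $d_n(x)$; once this bookkeeping is done the rest is mechanical.

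With Proposition~\ref{prop:size}, Proposition~\ref{prop:lipconst} and \eqref{eq:estimzncarre} available, the two bounds follow exactly as in Section~\ref{sec:proofmain}. For (i): the proof of Proposition~\ref{prop:estimoscill} uses only Proposition~\ref{prop:size} and Proposition~\ref{prop:lipconst} (the decomposition $S_1(N)+S_2(N)+S_3(N)$, Markov's inequality, Borel--Cantelli, with the same logarithmic correction when $\alpha=\beta$), so $\E(\osc_{\tau_N}(x))=\mathrm{O}(\tau_N^p)$ for every $p<\frac{\alpha}{H}$; feeding this into Proposition~$11.1$ of \cite{fal03}, the bound $\E({\mathcal N}(\tau_N))=\mathrm{O}(\tau_N^{p-D-1})$, Jensen's inequality and the limits $H\to\beta$, $p\to\frac{\alpha}{\beta}$ yields $\E(\dimbox(\Gamma_{\alpha,\beta}))\leqslant D+1-\frac{\alpha}{\beta}$, while a $0$-$1$ law — the partial sums $\sum_{n=0}^m\lambda^{-\frac{n\alpha}{D}}\Delta_n$ being Lipschitz and the successive meshes independent, as in Lemma~\ref{lem:dimconst} — shows $\dimbox(\Gamma_{\alpha,\beta})$ is almost surely constant, hence equal to its expectation, which is the bound in (i). For (ii), when $D=1$ the sum in \eqref{eq:estimzncarre} reduces to $|x-y|^{-1}$, so \eqref{eq:estimzncarre} is exactly Proposition~\ref{prop:denshn}(ii); hence Proposition~\ref{prop:majespercond} holds verbatim, and the Frostman-type energy estimate of Section~\ref{sec:hausdim} gives $\E(I_s(\mu_{W_N}))<\infty$ for all $1<s<1+\frac{\beta-\alpha}{H}$ while $\P(\vol(W_N)>0)\to1$, so that $\dimhaus(\Gamma_{\alpha,\beta})\geqslant1+\frac{\beta-\alpha}{H}$ for every $H>\beta$, hence $\geqslant2-\frac{\alpha}{\beta}$ almost surely (when $\alpha=\beta$ the admissible range of $s$ degenerates, but then $2-\frac{\alpha}{\beta}=D=1$ is the automatic lower bound furnished by the $1$-Lipschitz projection $\Gamma_{\alpha,\beta}\to[0,1]$); combined with (i) in the case $D=1$ and the universal inequality $\dimhaus\leqslant\dimbox$ this yields the announced equalities.
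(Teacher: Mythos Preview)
Your proposal is correct and follows exactly the approach the paper itself sketches: the paper states explicitly that Proposition~\ref{prop:laderniere} is ``given without a detailed proof'' and indicates in the preceding paragraph that Propositions~\ref{prop:size} and~\ref{prop:lipconst} carry over, that Proposition~\ref{prop:denshn}(ii) is replaced by \eqref{eq:estimzncarre}, and that the arguments of Section~\ref{sec:proofmain} then go through (with the Frostman lower bound available only when $D=1$ because \eqref{eq:estimzncarre} coincides with \eqref{eq:norminfdenshn} precisely then). Your outline reproduces this strategy faithfully, including the observation that the $\alpha=\beta$ case of (ii) is handled by the trivial projection bound.
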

\noindent Unfortunately, the fact that the sum $\sum_{i=1}^D\frac1{|x_i-y_i|}$ is a substitute for the inverse of the Euclidean norm $\|x-y\|^{-1}$ in the estimate \eqref{eq:estimzncarre} makes the proof of the lower-bound of the Hausdorff dimension more intricate for $D\geqslant 2$. This could be eventually considered as an extra-argument in favor of our Poisson-Voronoi construction.

\pass \textsc{Acknowledgement.} We thank an anonymous referee for a careful reading of the original manuscript, resulting in an improved and more accurate exposition.

\bibliographystyle{plain}

\end{document}